\newtheorem{theorem}{Theorem}[section]
\newtheorem{definition}[theorem]{Definition}
\def\z{{\bf z}}
\def\w{{\bf w}}
\def\P{\mathbb P}
\def\R{\mathbb R}
\def\C{\mathbb C}
\def\H{\mathbb H}
\def\SU{\mathrm{SU}}
\def\PSL{\mathrm{PSL}}
\def\Sp{\mathrm{Sp}}
\def\PSp{\mathrm{PSp}}
\def\HQ{\mathcal{H}_{\mathbb{H}}}
\newtheorem{corollary}[theorem]{Corollary}
\newtheorem{proposition}[theorem]{Proposition}
\newtheorem{lemma}[theorem]{Lemma}
\title[Free and Discrete 2-groups]{Free and Discrete subgroups generated by two quaternionic Heisenberg Translations}
\author{Sagar B.~Kalane and I.D.~Platis}
\address{The Institute of Mathematical Sciences, IV Cross Road, CIT Campus Taramani, Chennai 600 113 Tamil Nadu, India.}
\email{sagarbk@imsc.res.in}
\address{Department of Mathematics, University of Patras, 26b 504 Rio, Achaia, Greece.}
\email{idplatis@upatras.gr}
\subjclass[2020]{Primary 22E40; Secondary  51M10, 32M15, 20H10}
\keywords{Quaternionic hyperbolic geometry, Heisenberg translations, Discreteness.}
\date{\today}
\begin{document}
	\begin{abstract}
		Let $A$ and $B$ be two Heisenberg translations of $\Sp(2,1)$ with distinct fixed points. We provide sufficient conditions that guarantee the subgroup $\langle A, B \rangle $ is discrete and free by using Klein's combination theorem. 
	\end{abstract}
	\maketitle
	
	\section{Introduction and statement of results}
	The study of free and discrete two-generated groups within the context of the isometry group $G$ of symmetric spaces of rank 1 of non-compact type is a compelling and active area of research which has evolved significantly over the past few decades. Early research focused on the foundational aspects of group theory and its applications to geometry. The rich structure of symmetric spaces provides a fertile ground for exploring the properties of these groups, which are crucial for understanding various geometric and algebraic phenomena. In the $1980$s, researchers began to systematically investigate the relationships between free groups and discrete subgroups of isometry groups. Explicitly, when $G= \PSL(2,\C)$, Lyndon and Ullman studied this problem for two non-commuting unipotent parabolic elements, see \cite{lyu}.   Many subsequent works have contributed to this subject, see, for instance, \cite{ks},\cite{ig78}. In particular, Riley's influential work notably clarified the links between hyperbolic two-bridge knot and link complements.  Let $\SU(2,1)$ be the isometry group of two-dimensional complex hyperbolic space $\mathbb{H}^2_{\mathbb{C}}.$  Recently, Parker and Kalane established sufficient conditions under which subgroups generated by Heisenberg translations are discrete and free when $G= \SU(2,1);$ see \cite{gk1}.
	In general, the research is ongoing and continues to unveil new dimensions of these groups, influencing both theoretical and applied mathematics.
	
	In this paper, we consider $G= \Sp(2,1)$, which acts isometrically on the two-dimensional quaternionic hyperbolic space $\mathbb{H}^2_{\mathbb{H}}$, where $\mathbb{H}$ is the division ring of quaternions.  We denote by $\HQ$ the quaternionic Heisenberg group, comprising points $p=(\zeta,v)\in \HQ=\mathbb{H}\times\Im(\mathbb{H})$, where $\Im(\mathbb{H})$ is the set of purely imaginary quaternions. Let $p_1=(\zeta_1,v_1)$ and $p_2=(\zeta_2,v_2)$ be points of $\HQ$ and let $B$ and $A$ be the associated Heisenberg left translations respectively, given respectively in matrix form by
	\begin{equation}\label{eq-A-B}
		A = \begin{pmatrix}
			1 & -\sqrt{2}\,\bar{\zeta}_2 & -|\zeta_2|^2 + v_2 \\
			0 & 1 & \sqrt{2}\,\zeta_2 \\
			0 & 0 & 1
		\end{pmatrix}, \quad
		B= \begin{pmatrix}
			1 & 0 & 0 \\
			\sqrt{2}\,\zeta_1 & 1 & 0 \\
			-|\zeta_1|^2 + v_1 & -\sqrt{2}\,\bar{\zeta}_1 & 1
		\end{pmatrix}.
	\end{equation}
	(For the explicit definitions of all the above, see Section \ref{sec-prel}).
	Let $\langle A, B \rangle$ be the group generated by $A$ and $B$. 
	Our goal is to determine conditions on the parameters \(\zeta_j\) and \(v_j\) such that the group \(\langle A, B \rangle\) is discrete and freely generated by \(A\) and \(B\). 
	Let $p=(\zeta, v)\in\HQ$;  the quaternionic Kor\'anyi gauge of $p$ is defined by $ K(p)= \left| |\zeta|^{2} +v \right| ^{1/2}.$ A result that will follow from our main theorem is the following: 
	
	\begin{theorem}\label{thm-main}
		Let $p_i\in\HQ$ be distinct points of the quaternionic Heisenberg group, and let $A$ and $B$ be as in~\eqref{eq-A-B}: here, $A$ fixes $\infty$ and $B$ fixes the origin $o$. If 
		$$
		K(p_1)K(p_2)  \ge 2
		$$
		then the group $\langle A, B \rangle$ is discrete and freely generated by $A$ and $B$.
	\end{theorem}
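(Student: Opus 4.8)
The plan is to verify the hypotheses of Klein's combination theorem (the ping-pong lemma) for the cyclic groups $\langle A\rangle$ and $\langle B\rangle$ acting on the boundary $\partial\mathbb{H}^2_{\mathbb{H}}=\HQ\cup\{\infty\}$, which I would endow with the Cygan metric induced by the Kor\'anyi gauge $K$. Concretely, I would exhibit two disjoint nonempty sets $X_A\ni\infty$ and $X_B\ni o$ with
\[
A^n(X_B)\subseteq X_A \quad\text{and}\quad B^n(X_A)\subseteq X_B \qquad (n\in\mathbb{Z}\setminus\{0\}).
\]
Since $A$ and $B$ are nontrivial Heisenberg translations (the hypothesis $K(p_1)K(p_2)\ge 2$ forces $K(p_i)>0$, hence $p_i\neq o$), both are parabolic of infinite order, so Klein's theorem yields $\langle A,B\rangle\cong\langle A\rangle*\langle B\rangle$, a free group freely generated by $A$ and $B$; moreover the same pair of sets organizes a fundamental domain and thereby delivers discreteness simultaneously.

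For the ping-pong sets I would take complementary Kor\'anyi balls about the two fixed points: $X_B=\{z:K(z)<R\}$ about $o$ and $X_A=\{z:K(z)>R'\}\cup\{\infty\}$ about $\infty$, with $R\le R'$ so that $X_A\cap X_B=\varnothing$. Two geometric inputs are needed. First, $A$ is the left Heisenberg translation by $p_2$, hence a Cygan isometry sending $o$ to $p_2$; a short lemma on the gauge of powers should give $\min_{n\neq 0}K(A^n o)=K(p_2)$, attained at $n=\pm1$. Second, $B$ fixes $o$, its images $B^{\pm1}(\infty)$ are finite, and $B$ is the conjugate of the translation by $p_1$ under the Cygan inversion $\iota$ that exchanges $o$ and $\infty$ and satisfies $K\circ\iota\asymp 1/K$; the size of $B$ is recorded by its $(3,1)$-entry, whose gauge is $K(p_1)^2$, equivalently by the radius of its isometric (Cygan) sphere.

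With these in hand the two inclusions reduce to metric inequalities. For $A$: the set $A^n(X_B)$ is the Kor\'anyi ball of radius $R$ about $A^n o$, so by the Cygan triangle inequality each of its points has gauge at least $K(p_2)-R$, whence $A^n(X_B)\subseteq X_A$ as soon as $K(p_2)$ dominates a prescribed sum in $R$ and $R'$. For $B$: conjugating by $\iota$ turns $B^n(X_A)\subseteq X_B$ into the analogous translation statement for $p_1$ in the inverted gauge, reducing it to a lower bound on $K(p_1)$ by a sum in $1/R$ and $1/R'$. Optimizing the two resulting constraints over the free radii $R,R'$ (the extremal configuration being the tangent case $R=R'$) shows they are simultaneously satisfiable precisely when the product $K(p_1)K(p_2)$ exceeds the sharp threshold; tracking the $\sqrt2$-normalizations built into the Cygan spheres and into the entries of \eqref{eq-A-B} should pin this threshold at $2$, which is exactly the stated hypothesis.

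I expect the main obstacle to be the non-commutativity of $\mathbb{H}$. Unlike the complex case, the Heisenberg product, the Cygan metric, and the image of a Kor\'anyi ball under the translation $A^n$ all involve quaternion products in which order matters (terms such as $\bar{\zeta}\eta$ versus $\eta\bar{\zeta}$, and the $\Im$-parts of products), so every triangle-type estimate and the computation of $K(A^n o)$ must be carried out with careful quaternionic bookkeeping. The genuinely delicate point is \emph{sharpness}: obtaining the constant $2$ rather than a constant off by a bounded factor forces the metric estimates to be tight and the optimal radii to be identified exactly, and it is here that the precise normalizations in \eqref{eq-A-B} must be handled without slack.
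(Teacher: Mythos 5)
Your overall framework is the same as the paper's in spirit (Klein combination for regions about the two fixed points, with the inversion $\iota$ --- for which $K\circ\iota=1/K$ exactly in the paper's normalization --- transferring the analysis of $B$ to the translation by $p_1$), and your intermediate claims are correct: $A^n(o)=(n\zeta_2,nv_2)$, so $\min_{n\neq 0}K(A^n o)=K(p_2)$; Heisenberg translations are Cygan isometries, so $A^n(X_B)$ is the Cygan ball of radius $R$ about $A^n(o)$. The gap is your final optimization claim. Your two inclusions, obtained from the reverse Cygan triangle inequality, are
\[
K(p_2)\ \ge\ R+R',\qquad K(p_1)\ \ge\ \frac{1}{R}+\frac{1}{R'},
\]
and since
\[
(R+R')\left(\frac{1}{R}+\frac{1}{R'}\right)\ =\ 2+\frac{R}{R'}+\frac{R'}{R}\ \ge\ 4
\]
for all $R,R'>0$, with equality exactly at your tangent case $R=R'$, the two constraints can hold simultaneously if and only if $K(p_1)K(p_2)\ge 4$. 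So your plan, executed correctly, proves the statement with constant $4$, not $2$; no tracking of $\sqrt2$-normalizations can repair this, because the lost factor comes from AM--GM, not from bookkeeping. This is exactly where the paper works harder: it replaces round balls and the triangle inequality by fundamental domains bounded by the isometric spheres of $B^{\pm 1}$ (Cygan spheres of radius $1/K(p_1)$ through $o$) together with the exact circumscribing-radius computation of Proposition \ref{prop-diam} (via Lemmas \ref{lem-trig}--\ref{lem-bound}): a Cygan sphere of radius $r$ through a point lies in the ball about that point of radius $2^{1/4}r\bigl((1-\sin\psi_1)^{1/3}+(1+\sin\psi_1)^{1/3}\bigr)^{3/4}$, which is $\sqrt{2}\,r$ for vertical translations, as opposed to the $2r$ that the triangle inequality gives. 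This is what produces condition \eqref{main-1} of Theorem \ref{thm-main-improved}.

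There is, however, a second problem, and it is not your fault: the constant $2$ is not reachable by any correct argument, because Theorem \ref{thm-main} as printed is false for general distinct points (and it does not follow from Theorem \ref{thm-main-improved} either: by Lemma \ref{mam-bound} the right-hand side of \eqref{main-1} lies in $[2,4]$ and equals $2$ only when both translations are vertical, so what Theorem \ref{thm-main-improved} yields in general is the constant $4$). Concretely, take the distinct points $p_1=(\sqrt2,0)$ and $p_2=(-\sqrt2,0)$, so that $K(p_1)K(p_2)=2$. Then \eqref{eq-A-B} gives the real matrices
\[
A=\begin{pmatrix}1&2&-2\\0&1&-2\\0&0&1\end{pmatrix},\qquad
B=\begin{pmatrix}1&0&0\\2&1&0\\-2&-2&1\end{pmatrix},\qquad
AB^{-1}=\begin{pmatrix}1&-2&-2\\2&-3&-2\\-2&2&1\end{pmatrix},
\]
and direct multiplication shows $(AB^{-1})^2=I$ in $\Sp(2,1)$. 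Hence $\langle A,B\rangle$ contains a nontrivial involution and is not freely generated by $A$ and $B$ (this is the quaternionic shadow of the classical fact that a real parabolic pair with parameter product $\pm 2$ contains an elliptic of order two). The statement your method does prove --- freeness and discreteness when $K(p_1)K(p_2)\ge 4$ --- is the correct general form; the threshold $2$ is valid only for vertical translations, which is precisely the Kalane--Tiwari/Parker setting the paper cites.
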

	Our main theorem slightly stronger:
	
	\begin{theorem}\label{thm-main-improved}
		Let $p_1=(\zeta_1, v_1)$ and $p_2=(\zeta_2, v_2)$ be as in Theorem~\ref{thm-main}, and let $A$ and $B$ be as in~\eqref{eq-A-B}. If any one of the following conditions is satisfied, then the group $\langle A, B \rangle$ is free and discrete:\
		\begin{eqnarray}\label{main-1}
			{K(\zeta_1,v_1)}{K(\zeta_2,v_2)} & \ge & 
			2^{1/2}\left(\left(1-\frac{|v_2|}{{K^2(p_2)}}\right)^{1/3}+\left(1+\frac{|v_2|}{{K^2(p_2)}}\right)^{1/3}\right)^{3/4}\\
			\notag && \quad \times \left(\left(1-\frac{|v_1|}{{K^2(p_1)}}\right)^{1/3}+\left(1+\frac{|v_1|}{{K^2(p_1)}}\right)^{1/3}\right)^{3/4}\\
			\label{main-2}|\zeta_2| K(p_1) 
			&\ge & 
			\frac{2 |\zeta_1|^{2}\Re (\zeta_1 
				\bar{\zeta}_2 )}{|\zeta_2|K^3(p_1)} 
			+ 2,\\
			\label{main-3}|\zeta_1| K(p_2) 
			&\ge& 
			\frac{2 |\zeta_1|^{2}\Re (\zeta_1 
				\bar{\zeta}_2 )}{|\zeta_2|K^3(p_1)} 
			+ 2.
		\end{eqnarray}
	\end{theorem}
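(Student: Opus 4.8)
The strategy is to apply Klein's combination theorem (the ping--pong lemma) on the boundary sphere $\partial\mathbb{H}^2_{\mathbb{H}}=\HQ\cup\{\infty\}$, equipped with the Cygan metric $\rho$, whose gauge to the origin is the Kor\'anyi gauge, $\rho(p,o)=K(p)$. Since $A$ is unipotent and fixes $\infty$ while $B$ is unipotent and fixes $o$, each generates an infinite cyclic discrete group; it therefore suffices to produce two disjoint \emph{ping--pong regions} $X_A\ni\infty$ and $X_B\ni o$ with $A^n(X_B)\subseteq X_A$ and $B^n(X_A)\subseteq X_B$ for every $n\ne0$. Klein's theorem then yields $\langle A,B\rangle=\langle A\rangle*\langle B\rangle$, free of rank two and discrete. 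I would take $X_A=\{p:K(p)>\rho_A\}\cup\{\infty\}$ and $X_B=\{p:K(p)<\rho_B\}$ for radii $0<\rho_B\le\rho_A$ to be optimized, so that disjointness is automatic.

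The first reduction uses that $A$ acts on $\HQ$ as the left Heisenberg translation by $p_2$, which is a $\rho$--isometry, and that $A^{n}$ is translation by $np_2=(n\zeta_2,nv_2)$ with $K(np_2)\ge K(p_2)$ for $n\ne0$. By left--invariance, $K(A^np)=\rho(A^np,o)=\rho(p,p_2^{-n})$, so the inclusion $A^n(X_B)\subseteq X_A$ is equivalent to the estimate $\mathrm{dist}_\rho(S_{\rho_B},p_2^{-n})\ge\rho_A$, where $S_{\rho_B}=\{K=\rho_B\}$ is the Cygan sphere about $o$. For the $B$--side I would introduce the Cygan inversion $\iota\in\Sp(2,1)$, an involution exchanging $o$ and $\infty$ with $K(\iota(p))=1/K(p)$ and $\iota B\iota=$ translation by $p_1$; this conjugates the $B$--estimate into an $A$--type estimate with $p_1$ in place of $p_2$ and the radii replaced by $1/\rho_A,1/\rho_B$. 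The crude reverse triangle inequality $\mathrm{dist}_\rho(S_{\rho_B},q)\ge K(q)-\rho_B$ yields the two requirements $K(p_2)\ge\rho_A+\rho_B$ and $K(p_1)\ge1/\rho_A+1/\rho_B$, whose product forces only $K(p_1)K(p_2)\ge4$; beating this down to the sharp bound is the crux.

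The gain comes from computing the \emph{exact} minimal Cygan distance from the sphere $S_{\rho_B}=\{|\zeta|^4+|v|^2=\rho_B^4\}$ to an external point $q$, which depends on how the displacement to $q$ splits between the horizontal factor $\mathbb{H}$ and the vertical factor $\Im(\mathbb{H})$. Carrying out this constrained minimisation (a quartic extremal problem on the ellipsoidal sphere) and then optimising $\rho_A,\rho_B$ replaces the linear quantities $K(p_i)$ by expressions in which the vertical ratios $|v_i|/K^2(p_i)$ enter through cube roots; this is precisely the source of the exponents $1/3$ and $3/4$ and yields condition \eqref{main-1}. In the nearly horizontal regime, where \eqref{main-1} is weakest, I would instead use ping--pong regions adapted to the horizontal coordinate (Cygan cylinders over $\mathbb{H}$--balls rather than Kor\'anyi balls); tracking the horizontal offset of the centre of the isometric sphere of $B$, namely $B^{-1}(\infty)$, introduces the correlation term $\Re(\zeta_1\bar\zeta_2)$ and the radius $\sim 1/K(p_1)$ of that sphere, producing the asymmetric conditions \eqref{main-2} and \eqref{main-3}. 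Since any one of the three estimates already guarantees the ping--pong inclusions, the proof splits into these three cases.

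The main obstacle is the sharp distance--from--sphere computation together with the subsequent radius optimisation: everything hinges on not losing the factor of two that the naive triangle inequality discards, and the fractional exponents in \eqref{main-1} show that the extremal configuration is genuinely nonlinear. Two secondary points need care: first, one must check that $n=\pm1$ is the binding case for all $n\ne0$, which follows from $K(np_2)\ge K(p_2)$ and the monotonicity of $\mathrm{dist}_\rho(S_{\rho_B},\cdot)$ in $K(q)$; and second, one must verify that $\rho$ genuinely satisfies the (reverse) triangle inequality in the quaternionic Heisenberg group, so that the reduction of the inclusions to metric estimates is legitimate. Once these are in place, feasibility of the optimised radii under each of \eqref{main-1}--\eqref{main-3} delivers the disjoint ping--pong pair, and Klein's combination theorem completes the proof.
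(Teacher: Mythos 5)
Your high-level framework does agree with the paper's: both arguments run Klein's combination theorem on $\partial \mathbb{H}^2_{\mathbb{H}}$, both use the inversion $\iota$ (with $K(\iota(p))=1/K(p)$ and $\iota B\iota$ a translation by $p_1$) to turn the $B$-side into a translation estimate, and both treat \eqref{main-1} with a rotationally symmetric region about $o$ while treating \eqref{main-2}--\eqref{main-3} with regions adapted to the vertical projection onto $\mathbb{H}$. But as a proof the proposal has a genuine gap: its entire analytic content is asserted rather than proved. You state that the ``exact minimal Cygan distance from $S_{\rho_B}$ to an external point,'' followed by optimisation over $\rho_A,\rho_B$, produces exactly the right-hand side of \eqref{main-1} with its exponents $1/3$ and $3/4$, and that ``tracking the horizontal offset'' of $B^{-1}(\infty)$ produces the term $2|\zeta_1|^{2}\Re(\zeta_1\bar\zeta_2)/(|\zeta_2|K^3(p_1))$ in \eqref{main-2}--\eqref{main-3}. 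Those computations \emph{are} the theorem: in the paper they occupy Proposition \ref{prop-diam} together with Lemmas \ref{lem-trig}, \ref{mam-bound}, \ref{lem-bound} (a delicate trigonometric maximisation carried out in the appendix) for \eqref{main-1}, and Proposition \ref{prop-fan-domain} (explicit centres of the isometric spheres of $B^{\pm 1}$ projected on the direction $\bar\zeta_2/|\zeta_2|$) for \eqref{main-2}--\eqref{main-3}. Labelling these steps ``the crux'' and ``the main obstacle'' concedes they are undone; moreover, it is not verified that your ball-based optimisation returns \eqref{main-1} at all rather than some different (possibly stronger) sufficient condition, since the sharp disjointness threshold for two Cygan balls depends jointly on both radii and on the vertical ratios of both centres, and there is no a priori reason that the two-radius optimum factorises into a product condition the way the paper's $R_oR_\infty\le 1$ does. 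Note also that the paper's key extremal problem is a \emph{maximum} (the Cygan ``diameter'' of a sphere seen from one of its own points, Proposition \ref{prop-diam}), not the \emph{minimum} distance from a ball to an exterior point that you propose; these are different optimisation problems, and agreement of their outputs would itself need proof.

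One step you do spell out is incorrect as stated: you claim $n=\pm1$ is the binding case because $K(np_2)\ge K(p_2)$ and $\mathrm{dist}_{\rho_0}(S_{\rho_B},\cdot)$ is ``monotone in $K(q)$.'' The sharp Cygan distance from a ball to a point is \emph{not} a function of the gauge $K(q)$ alone; it depends on how $q$ splits into horizontal and vertical parts (for instance, with $K(q)=R$ it equals $R-\rho$ for horizontal $q$ but $\sqrt{R^2-\rho^2}$ for vertical $q$), which is exactly the anisotropy your sharp computation is meant to exploit, and the vertical ratio $|nv_2|/K^2(np_2)$ of $np_2$ changes with $n$. So with metric balls as ping-pong sets you must genuinely estimate every power $A^n$, $B^n$; this is fixable (the crude triangle-inequality bound can be shown to suffice for $|n|\ge 2$), but it requires an argument you have not given. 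The paper's choice of regions makes the issue disappear: the exteriors of the two isometric spheres of $B$ and $B^{-1}$, tangent at the fixed point $o$, form a fundamental domain for the whole cyclic group $\langle B\rangle$, so Klein's theorem (Proposition \ref{KCT}) is applied to fundamental domains and no power-by-power verification is ever needed. To complete your route you would need both the uniform-in-$n$ control and the full extremal computation; to follow the paper's, replace the Kor\'anyi balls by the isometric spheres and invoke Propositions \ref{prop-iso-diam} and \ref{prop-fan-domain}.
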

	
	By restricting parameters \((\zeta_i, v_i)\) to lie in \(\mathbb{C} \times \mathbb{R}\), we obtain the following corollary, which recovers the main theorem of Parker and Kalane, see \cite{gk1}. Let $(\zeta_j, v_j) = (s_je^{i\theta_j}, t_j)$ for $j=1, 2$. Define ${r_1 = K(\zeta_1,v_1)} = ({s_1}^4+{t_1}^2)^{1/4}$ and ${r_2= K(\zeta_2,v_2)} = ({s_2}^4+{t_2}^2)^{1/4}$.
	
	\begin{corollary}
		\label{prop-main-improved-complex}
		Let \((\zeta_1, v_1)\) and \((\zeta_2, v_2)\) be the elements of the Heisenberg group \(\mathbb{C} \times \mathbb{R}\), and let \(A\) and \(B\) be defined as in~\eqref{eq-A-B}. If any one of the following inequalities is satisfied:
		\begin{enumerate}
			\item[{a)}]
			\begin{eqnarray*} 
				{r_1}^2 {r_2}^2& \ge & 
				2^{1/2}\left(\left(1-\frac{t_2}{{r_2}^2}\right)^{1/3}+\left(1+\frac{t_2}{{r_2}^2}\right)^{1/3}\right)^{3/4} \\
				&&\times \left(\left(1-\frac{t_1}{{r_1}^2}\right)^{1/3}+\left(1+\frac{t_1}{{r_1}^2}\right)^{1/3}\right)^{3/4},
			\end{eqnarray*}
			\item[{b)}] if $s_1\neq0$ then $\begin{displaystyle}
				s_1\,r_2 \ge \begin{cases}
					\begin{displaystyle}2{\left(\frac{~{s_2}}{r_2}\right)}^3\,\cos(\theta_1-\theta_2)+2  \end{displaystyle} & \hbox{if } s_2\neq0, \\
					2 & \hbox{if } s_2=0.
				\end{cases}
			\end{displaystyle}$
			\item[{c)}] if $s_2\neq 0$ then $\begin{displaystyle}
				s_2\,r_1  \ge \begin{cases}
					\begin{displaystyle}2{\left(\frac{~{s_1}}{r_1}\right)}^3\,\cos(\theta_1-\theta_2)+2  \end{displaystyle} & \hbox{if } s_2\neq0, \\
					2 & \hbox{if } s_2=0.
				\end{cases}
			\end{displaystyle}$
		\end{enumerate}
		then the group \(\langle A, B \rangle\) is discrete and free.
	\end{corollary}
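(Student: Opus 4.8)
The plan is to obtain Corollary~\ref{prop-main-improved-complex} as the special case of Theorem~\ref{thm-main-improved} in which the data $(\zeta_j,v_j)$ are confined to the complex Heisenberg subgroup $\mathbb{C}\times\mathbb{R}\,i\subset\HQ$; this subgroup corresponds to the inclusion $\SU(2,1)\subset\Sp(2,1)$ induced by $\mathbb{C}\subset\mathbb{H}$, so that~\eqref{eq-A-B} becomes the pair of complex Heisenberg translations treated in~\cite{gk1}. Since the three inequalities of Theorem~\ref{thm-main-improved} already guarantee discreteness and freeness for arbitrary $p_j\in\HQ$, the whole task reduces to rewriting each of~\eqref{main-1}--\eqref{main-3} in the coordinates $(s_j,\theta_j,t_j)$ and checking that it becomes, respectively, condition~(a), (c), and~(b).

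First I would record how the quaternionic invariants restrict. Writing $\zeta_j=s_je^{i\theta_j}\in\mathbb{C}$ gives $|\zeta_j|=s_j$, and the center coordinate must be taken as $v_j=t_j\,i$ (a purely imaginary quaternion) precisely so that
$$K^2(p_j)=\bigl||\zeta_j|^2+v_j\bigr|=\bigl|s_j^2+t_j\,i\bigr|=\bigl(s_j^4+t_j^2\bigr)^{1/2}=r_j^2;$$
in particular $K(p_j)=r_j$ and $|v_j|=|t_j|$, which is exactly what makes the definition $r_j=(s_j^4+t_j^2)^{1/4}$ consistent with the Kor\'anyi gauge. Finally $\zeta_1\bar{\zeta}_2=s_1s_2e^{i(\theta_1-\theta_2)}$ lies in $\mathbb{C}$, so $\Re(\zeta_1\bar{\zeta}_2)=s_1s_2\cos(\theta_1-\theta_2)$. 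These identities are the computational heart of the argument, since every quantity occurring in~\eqref{main-1}--\eqref{main-3} is now expressed through $s_j,t_j,\theta_j,r_j$.

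Next I would substitute term by term. Replacing $K(p_j)$ by $r_j$ and $|v_j|/K^2(p_j)$ by $|t_j|/r_j^2$ in~\eqref{main-1} turns it into~(a), using that $x\mapsto(1-x)^{1/3}+(1+x)^{1/3}$ is even, so the sign of $t_j$ is immaterial. For~\eqref{main-2}, inserting $|\zeta_1|=s_1$, $|\zeta_2|=s_2$, $\Re(\zeta_1\bar{\zeta}_2)=s_1s_2\cos(\theta_1-\theta_2)$ and $K(p_1)=r_1$ collapses the fraction to $2(s_1/r_1)^3\cos(\theta_1-\theta_2)$ and yields the bound $s_2r_1\ge 2(s_1/r_1)^3\cos(\theta_1-\theta_2)+2$ of~(c). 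The same computation applied to its companion~\eqref{main-3}, with the indices $1$ and $2$ interchanged, produces~(b).

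The part I expect to require genuine care---the main, if modest, obstacle---is the bookkeeping of the degenerate configurations recorded in the case distinctions of~(b) and~(c). In each of~\eqref{main-2} and~\eqref{main-3} one of the moduli $|\zeta_j|$ sits in a denominator, so the specialized inequality is meaningful only when that $s_j$ is nonzero; this is the standing hypothesis attached to~(b) and~(c). When the \emph{other} modulus vanishes, the factor $\Re(\zeta_1\bar{\zeta}_2)$ becomes $0$, the cross term disappears, and the bound collapses to the clean inequality $\ge 2$, which is exactly the degenerate branch of the corresponding case. After matching all the inequalities in this way, no further argument is needed: each specialized condition is literally one of the hypotheses of Theorem~\ref{thm-main-improved}, whence $\langle A,B\rangle$ is discrete and free.
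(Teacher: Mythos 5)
Your overall strategy is the same as the paper's: the paper gives no separate proof of Corollary~\ref{prop-main-improved-complex}, presenting it as the immediate restriction of Theorem~\ref{thm-main-improved} to $\mathbb{C}\times\mathbb{R}\subset\HQ$, and your substitutions $K(p_j)=r_j$, $|v_j|=|t_j|$, $\Re(\zeta_1\bar{\zeta}_2)=s_1s_2\cos(\theta_1-\theta_2)$, the appeal to evenness of $x\mapsto(1-x)^{1/3}+(1+x)^{1/3}$, and the identification of the degenerate branches are exactly the intended computation. Your matching of \eqref{main-2} with (c), and of \eqref{main-3} with (b) after interchanging the indices on the right-hand side of \eqref{main-3}, is also correct; that interchange silently fixes a typo in \eqref{main-3} (whose right-hand side is printed identically to that of \eqref{main-2}), but the paper itself licenses this reading, since it says \eqref{main-3} is obtained ``by swapping roles of $A$ and $B$.''

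There is, however, a genuine gap in your treatment of part (a). Specializing \eqref{main-1} with $K(p_j)=r_j$ yields
$$
r_1r_2 \;\ge\; 2^{1/2}\left(\left(1-\frac{|t_2|}{r_2^2}\right)^{1/3}+\left(1+\frac{|t_2|}{r_2^2}\right)^{1/3}\right)^{3/4}\left(\left(1-\frac{|t_1|}{r_1^2}\right)^{1/3}+\left(1+\frac{|t_1|}{r_1^2}\right)^{1/3}\right)^{3/4},
$$
whereas condition (a) asserts this bound for $r_1^2r_2^2$. These hypotheses are not equivalent, and (a) does not follow from \eqref{main-1}: each factor $\bigl((1-x)^{1/3}+(1+x)^{1/3}\bigr)^{3/4}$ lies in $[2^{1/4},2^{3/4}]$ for $x\in[0,1]$, so the right-hand side always lies in $[2,4]$. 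Taking $t_1=t_2=0$, $s_1=s_2=\sqrt{2}$ and $\theta_1-\theta_2$ small but nonzero, one gets $r_1^2r_2^2=4\ge 4$, so (a) holds, while $r_1r_2=2<4$, so \eqref{main-1} fails; moreover \eqref{main-2} and \eqref{main-3} fail too, since their right-hand sides are approximately $4$ while their left-hand sides equal $2$. Hence your claim that the substitution ``turns \eqref{main-1} into (a)'' is false as written, and no condition of the theorem applies to these parameters. To be fair, the mismatch is an inconsistency internal to the paper: the same exponent problem makes Theorem~\ref{thm-main} (threshold $K(p_1)K(p_2)\ge 2$) underivable from \eqref{main-1}, whose right-hand side can equal $4$; the numerology of Theorem~\ref{thm-main} and of condition (a) is consistent only if the left-hand side of \eqref{main-1} is $K^2(p_1)K^2(p_2)$, which is not what the proof in Section~\ref{sec-main} delivers. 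But a blind proof must either derive (a) exactly as stated (impossible from the theorem as stated) or explicitly flag the discrepancy and identify which statement carries the typo; asserting a literal match where the exponents disagree is a step that fails.
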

	Moreover, in the special case of vertical Heisenberg translations, our result not only recovers but also unifies and extends the independent results of Kalane–Tiwari and Parker, see in \cite{sd}, \cite{par}.

	The paper is organized as follows: in Section \ref{sec-prel} we review all notions that we use throughout this work. In Section \ref{sec-relation} we prove a relation between Cygan spheres that is crucial for the proof of our main theorem. The proofs of the computational lemmas required are given in Section \ref{sec-appendix}. Finally, in Section \ref{sec-main} we prove Theorem \ref{thm-main-improved}.
	
	\section{Preliminaries}\label{sec-prel}
	In Section \ref{sec-quat}, we present the basics about quaternions and in Section\ref{sec-def}, we present quaternionic hyperbolic plane and its group of isometries with respect to its quaternion K\"ahler structure. Quaternionic Heisenberg group and its group of similarities are found in Section \ref{sec-qheis}. We close this section with the definition of Kor\'anyi spherical coordinates (Section \ref{sec-koranyi}) and an estimate for the Cygan distance (Section \ref{sec-estimate}).
	
	\subsection{Quaternions}\label{sec-quat} Let $\mathbb{H}$ denote the division ring of quaternions. Any element $a \in \mathbb{H}$ can be uniquely expressed as $a= a_0 +a_1 i +a_2 j +a_3 k$, where $a_0, a_1, a_2, a_3 \in \mathbb{\R}$ and $i,j,k$ satisfy the relations $i^2= j^2=k^2=ijk= -1$. Let $\bar a = a_0 -a_1 i -a_2 j -a_3 k$ be the conjugate of $a$. The real part $\Re(a)$ and the imaginary part $\Im(a)$ of $a$ are respectively defined by $\Re(a) = a_0$ and $\Im(a)=a_1 i +a_2 j +a_3 k$. The modulus of $a$ is defined by $|a| = \sqrt{a\bar a} =\sqrt{{a_0}^2+{a_1}^2+{a_2}^2+{a_3}^2}.$ Two quaternions $a$ and $b$ are similar if there exists a nonzero $\mu \in\mathbb{H}$ such that $a = \mu b {\mu}^{-1}$. Thus, $a$ and $b$ are similar if and only if $\Re(a)=\Re(b)$ and $|a|=|b|.$
	
	A quaternion $q$ is a unit quaternion if $|q|=1$; it can be written as $q= \cos\theta + \mu \sin\theta,$ where $\theta \in [0,\pi],$ and $\mu$ is a purely imaginary unit quaternion satisfying $\mu^2=-1$. The set of such quaternions is in bijection with the unit sphere $S^2$. 
	If $a$ is any quaternion with modulus $r$, then it can be written as $a = r (\cos\theta + \mu \sin\theta)$,
	where  $\mu$ is a purely imaginary unit quaternion.
	\subsection{Quaternionic Hyperbolic plane}\label{sec-def}
	Let $\mathbb{H}^{2,1}$ denote the three-dimensional right vector space over $\mathbb{H}$, equipped with the Hermitian form of signature $(2,1)$, defined by:
	$$
	\langle\z,\w\rangle=\w^{\ast}H\z=\bar w_3 z_1+\bar w_2 z_2+\bar w_1 z_3,
	$$
	where $\z,\,\w$ are column vectors in $\H^3$ and the matrix of the Hermitian form is given by
	\begin{center}
		$H = \begin{pmatrix}
			0 & 0 & 1\\
			0 & 1 & 0 \\
			1 & 0 & 0\\
		\end{pmatrix}.$
	\end{center}
	For any ${\bf w} \in \mathbb{H}^{2,1}$,  $\langle\w,\w\rangle\in\R$. Based on whether this value is positive, null or negative, we can divide $\H^{2,1}\setminus \{\bf 0\}$ into three disjoint sets: \begin{eqnarray*}
		V_+ & = & \{\w\in\H^{2,1}:\langle\w,\w \rangle>0\}, \\
		V_{0} & = & \{\w \in\H^{2,1}\setminus \{{\bf 0}\}:\langle\w,\w \rangle=0\},\\
		V_{-} & = & \{\w\in\H^{2,1}:\langle\w,\w \rangle<0\}.
	\end{eqnarray*}
	The projective model of quaternionic hyperbolic plane is defined by $\mathbb{H}^2_{\mathbb{H}}=\P (V_{-})$, where $\P$ is the natural projection map on $\H^{2,1}\setminus \{{\bf 0}\}.$ 
	$\mathbb{H}^2_{\mathbb{H}}$ is a quaternion-K\"ahler manifold with its metric defined by
	$$
	ds^2=-\frac{4}{\langle w,w\rangle^2}\det\left(\begin{matrix}
		\langle \w,\w\rangle &\langle d\w,\w\rangle \\
		\langle \w,d\w\rangle &\langle d\w,d\w\rangle\end{matrix}\right).
	$$
	Equivalently, it is given by the distance function $\rho$ where
	$$
	\cosh^2\left(\frac{\rho(\P \w_1,\P \w_2)}{2}\right)=\frac{\langle\w_1,\w_2\rangle \langle\w_2,\w_1\rangle}{\langle\w_1,\w_1\rangle\langle\w_2,\w_2\rangle}.
	$$
	The boundary of quaternionic hyperbolic plane is defined by $\partial \mathbb{H}^2_{\mathbb{H}}=\P (V_{0}).$ Let $\Sp(2,1)$ be the group of $3 \times 3$ quaternionic matrices that preserves the above Hermitian form; it acts isometrically on $\mathbb{H}^2_{\mathbb{H}}$ and the full isometry group of 
	$\mathbb{H}^2_{\mathbb{H}}$ is the projective unitary group $\PSp(2,1) = \Sp(2,1)/ \{I, -I\}.$
	Isometries of $\PSp(2,1)$ are classified according to their fixed points. An isometry in $\PSp(2,1)$ is called \emph{hyperbolic} (resp. \emph{parabolic}) if it has exactly two fixed points (resp. one fixed point) on the boundary $\partial \mathbb{H}^2_{\mathbb{H}}$. It is called \emph{elliptic} if it fixes a point inside $\mathbb{H}^2_{\mathbb{H}}.$

	The Siegel domain model of quaternionic hyperbolic plane is obtained by taking the section $w_3=1$; then 
	$$ \mathbb{H}^2_{\mathbb{H}} =\mathcal{S}=\{ (w_1,w_2) \in {\H}^2:  2\Re(w_1)+|w_2|^2<0\}  .$$ The lift of $\infty$ is defined by $ q_\infty =(1,0,0)^T,$ so that $\infty \in \partial \mathbb{H}^2_{\mathbb{H}}$. 
	There exists a bijection between the set $\P(V_0)\setminus\{\infty\}$ and the set
	$$
	\partial\mathcal{S}=\{(w_1,w_2)\in\mathbb{H}^2:\,2\Re(w_1)+|w_2|^2=0\}.
	$$
	The latter is in bijection with the set 
	$\mathbb{H} \times \Im(\mathbb{H})$:
	$$
	\mathbb{H} \times \Im(\mathbb{H})\ni(w,s)\mapsto (-|w|^2+s,w/2)\in \partial\mathcal{S}. 
	$$
	Thus, the natural identification
	of the set of the finite boundary points of quaternionic hyperbolic plane to the set $\mathbb{H} \times \Im(\mathbb{H})$. Hence, the full boundary $\partial \mathbb{H}^2_{\mathbb{H}} $ can be viewed as the one-point compactification of $\mathbb{H} \times \Im(\mathbb{H}).$

	\subsection{Quaternionic Heisenberg group}\label{sec-qheis} The {\it quaternionic Heisenberg group} $\HQ$ is $\HQ=\mathbb{H}\times\Im(\mathbb{H})=\partial \mathbf{H}^2_{\mathbb{H}}\setminus\{\infty\}$ with group multiplication given by
	$$
	(\zeta_1, v_1)(\zeta_2, v_2) = (\zeta_1 + \zeta_2, v_1 + v_2 + 2\Im(\bar{\zeta_2} \zeta_1)),
	$$
	for each $(\zeta_1,v_1)$ and $(\zeta_2,v_2)\in \HQ$. The Cygan metric is defined as follows. Consider the map $\kappa:\HQ\to\mathbb{H}$ given by $\kappa(\zeta,v)=-|\zeta|^2+v$ for each $(\zeta,v)\in\HQ$. The {\it quaternionic Kor\'anyi gauge}  is then given by   $$ K(\zeta,v)= 
	\big| \kappa(\zeta,v) \big| ^{1/2} = \big(|\zeta|^{4} +|v|^2 \big) ^{1/4}.$$
	Then the {\it Cygan metric} on the quaternionic Heisenberg group $\HQ$ is given by
	$$
	\rho_0\big((\zeta_1,v_1),\, (\zeta_2,v_2)\big) = K\left((\zeta_2, v_2)^{-1} (\zeta_1, v_1)\right)= \big| |\zeta_1 - \zeta_2|^2 + v_1 - v_2 - 2 \Im(\bar{\zeta}_2 \zeta_1 ) \big|^{\frac{1}{2}},
	$$
	for each $(\zeta_i,v_i)\in\HQ$, $i=1,2$. This distance is invariant under the following transformations:
	
	{\it Translations}: The group $\HQ$ acts on itself via left-translations. For a fixed $p_0=(\zeta_0, v_0) \in \HQ$, the translation map $T_{p_0}$ is given by  
	\[
	T_{p_0} : p=(\zeta, v) \mapsto p_0p=(\zeta_0 + \zeta, v_0 + v + 2\Im(\bar{\zeta}\zeta_0)).  
	\]  
	As a matrix in $\Sp(2,1)$, 
	the transformation $T_{p_0}$ is given by  
	$$
	T_{p_0} =  
	\begin{pmatrix}  
		1 & -\sqrt{2} \bar{\zeta_0} & \kappa(p_0) \\  
		0 & 1 & \sqrt{2} \zeta_0 \\  
		0 & 0 & 1  
	\end{pmatrix}. 
	$$
	A translation is a parabolic transformation that fixes $\infty$. When $\zeta_0 = 0$, the transformation $T_{(0, v_0)}$ is called a \emph{vertical translation}; otherwise, it is called a \emph{non-vertical translation}.
	
	\smallskip

	{\it Rotations:} The group $\Sp(1)$ acts on $\HQ$ by rotations. A rotation $R_\mu$, $\mu\in\Sp(1)$, is given for each $(\zeta,v)\in\HQ$ by 
	\[ R_\mu: (\zeta, v) \mapsto (\mu \zeta, v ), ~\mu \in \Sp(1).  
	\] 
	As a matrix in $\Sp(2,1)$, $R_\mu$ given by  
	$$
	R_\mu =  
	\begin{pmatrix}  
		1 & 0 &0 \\  
		0 & \mu & 0 \\  
		0 & 0 & 1  
	\end{pmatrix}.  
	$$
	
	{\it Dilations:} The set of positive real numbers $\mathbb{R}^+$ acts on $\HQ$ by dilations. A dilation $D_\delta$, $\delta>0$, is defined for each $(\zeta,v)\in\HQ$ by 
	\[ D_\delta:(\zeta, v) \mapsto (\delta\zeta, \delta^2v ). 
	\]
	As a matrix in $\Sp(2,1)$,
	$$
	D_\delta=\begin{pmatrix}  
		\delta & 0 &0 \\  
		0 & 1 & 0 \\  
		0 & 0 & 1/\delta  
	\end{pmatrix}.  
	$$
	All the above transformations extend to transformations of $\PSp(2,1)$ which fix infinity. In fact, the {similarity group}, comprising translations, dilations and rotations is the full subgroup of $\PSp(2,1)$ which fixes $\infty.$ 
	
	Inversion $\iota$ is the involution 
	that exchanges origin $o=(0,0)$ and $\infty$. In Heisenberg coordinates, it is defined by 
	$$ \iota:(\zeta, v) \mapsto \left(\frac{2\zeta}{-|\zeta|^2+v} , \frac{-4v}{|\zeta|^4+|v|^2}\right), 
	$$
	and in  matrix form, invwrsion is given by
	\begin{equation}\label{eq-iota}
		\iota=\left(\begin{matrix} 0 & 0 & 1 \\ 0 & 1 & 0 \\ 1 & 0 & 0 \end{matrix}\right).
	\end{equation}
	
	Every element of ${\rm Sp}(2,1)$ is a composition of similarities and inversion $\iota$. For more details, see in \cite{kp1}.

	\subsubsection{Cygan spheres}
	For a given center $p_0 \in \partial \mathbb{H}^2_{\mathbb{H}}$ and a given radius $r > 0$, the {Cygan sphere} of radius $r$ centered at $p_0$ is given by
	\begin{equation}
		S_r(p_0) = \{ p = (\zeta,v) \in \HQ \setminus\{\infty\}: \rho_0(p, p_0) = r \}.
	\end{equation}
	In particular, when the center is at the origin $o = (0,0)$, the Cygan sphere is
	\begin{equation}
		S_r(0) = \{ p = (\zeta,v) \in \HQ : |\zeta|^4 + |v|^2 = r^4 \}.
	\end{equation}
	\subsection{Quaternionic Kor\'anyi coordinates}\label{sec-koranyi}
	Every $(\zeta,v)\in\HQ$ 
	has the following representation:
	$$
	(\zeta,v)=\left(r\sqrt{\cos\psi}~U,\,r^2\sin\psi~ u_2\right),
	$$
	where
	$$
	r=K(\zeta,v)=||\zeta|^2+v|^{1/2}\ge 0,\quad  \psi=\arccos\left(\frac{|\zeta|^2}{r^2}\right)\in[-\pi/2,\pi/2],
	$$
	and $U=\cos\varphi+\sin\varphi\,u_1$ is a unit quaternion such that $\varphi\in[0,2\pi]$ and $u_i$, $i=1,2$ are purely imaginary quaternions:
	\begin{eqnarray*}
		u_1&=&\cos\alpha_1\cos\alpha_2~i+\cos\alpha_1\sin\alpha_2~j+\sin\alpha_1~k,\\
		u_2&=&\cos\beta_1\cos\beta_2~i+\cos\beta_1\sin\beta_2~j+\sin\beta_1~k,\
	\end{eqnarray*}
	such that
	$ \alpha_1, \beta_1 \in [-\pi/2,\pi/2],$ $ \alpha_2, \beta_2 \in [0,2 \pi]. 
	$
	\subsection{An estimate for the Cygan distance}\label{sec-estimate}
	Let $p_i=(\zeta_i,v_i)\in \HQ$. Then their Cygan distance is given by
	\begin{eqnarray*}
		\rho_0^2(p_1,p_2)&=&\left| |\zeta_1 - \zeta_2|^2 + v_1 - v_2 - 2 \Im(\overline{\zeta_2} \zeta_1) \right|\\
		&=&\left||\zeta_1|^2+|\zeta_2|^2-2\overline{\zeta_2}\zeta_1
		+v_1-v_2\right|\\
		&=&\left||\zeta_1|^2+|\zeta_2|^2+v_1-v_2-2\overline{\zeta_2}\zeta_1
		\right|\\
		&\le&\left||\zeta_1|^2+v_1+|\zeta_2|^2-v_2\right|+2|\overline{\zeta_2}\zeta_1|\\
		&=&\left((|\zeta_1|^2+|\zeta_2|^2)^2+|v_1-v_2|^2\right)^{1/2}+2|\zeta_2||\zeta_1|,
	\end{eqnarray*}
	where we have used the triangle inequality.
	Consider the Kor\'anyi coordinates
	$$
	(r_i\sqrt{\cos\psi_i}U_i,\,r_i^2\sin\psi_i~u_{2,i})
	$$
	for the points $p_i$, $i=1,2$, respectively. Then the above estimate can be expressed as
	\begin{eqnarray}\label{eq-gen-est}
		d=\rho_0^2(p_1,p_2)&\le &\sqrt{r_1^4+r_2^4+2r_1^2r_2^2(\cos\psi_1\cos\psi_2+\sin\psi_1\sin\psi_2\cos\phi)}\\
		\notag&&+2r_1r_2\sqrt{\cos\psi_1\cos\psi_2},    
	\end{eqnarray}
	where $\phi$ is the angle between the unit quaternions $u_{2,1}$ and $u_{2,2}$.
	In particular, when $r_1=r_2=r$, the estimate becomes
	\begin{eqnarray}\label{eq-sph-est}
		d/r^2&\le &\sqrt{2(1+\cos\psi_1\cos\psi_2+\sin\psi_1\sin\psi_2\cos\phi)}\\
		\notag&& + 2\sqrt{\cos\psi_1\cos\psi_2}. 
	\end{eqnarray}
	Inequality (\ref{eq-gen-est}) becomes an equality if and only if there exists a non-positive $\lambda$ such that
	$$
	|\zeta_1|^2+|\zeta_2|^2+v_1-v_2=2\lambda \overline{\zeta_2}\zeta_1.
	$$
	In the case where $\lambda=0$, we necessarily have $\zeta_1=\zeta_2=0$ and $ v_1=v_2$, which is impossible since the points are assumed to be distinct. Thus $\lambda <0$, and
	\begin{eqnarray*}
		&&
		|\zeta_1|^2+|\zeta_2|^2=2\lambda\Re(\overline{\zeta_2}\zeta_1),\\
		&&
		v_1-v_2=2\lambda\Im(\overline{\zeta_2}\zeta_1).
	\end{eqnarray*}
	If $v_1=v_2$, then $\Im(\overline{\zeta_2}\zeta_1)=0$, and by the first equation, we also obtain $\Re(\overline{\zeta_2}\zeta_1)<0$. This implies that $\zeta_1 = k \zeta_2$ for some real constant $k<0$. 
	Furthermore, if $(\zeta_i,v_i)$, $i=1,2$, lie in the Cygan sphere of radius $r$ with $v_1=v_2$, then
	$$
	|\zeta_i|^4+v_i^2=r^4, \quad i=1,2,
	$$
	and subtracting these two equations gives $|\zeta_1|=|\zeta_2|$. Since $\zeta_1 = k \zeta_2$ for some $k<0$, we must have  $\zeta_1 = - \zeta_2.$
	
	When $v_1\neq v_2$, we have
	\begin{equation*}\label{eq-est}
		\lambda=\frac{|\zeta_1|^2+|\zeta_2|^2}{2\Re(\overline{\zeta_2}\zeta_1)},
	\end{equation*}
	hence
	\begin{equation}\label{eq-ineq}
		\frac{\Im{(\overline{\zeta_2}\zeta_1)}}{\Re(\overline{\zeta_2}\zeta_1)}=\frac{v_1-v_2}{|\zeta_1|^2+|\zeta_2|^2}.   
	\end{equation}
	Assuming that the points lie on the same Cygan sphere and using quaternionic Kor\'anyi coordinates, Equation (\ref{eq-ineq}) becomes
	\begin{equation}\label{eq-ineq-sph}
		\tan\Phi=\frac{\sin\psi_1~u_{2,1}-\sin\psi_2~u_{2,2}}{\cos\psi_1+\cos\psi_2},    
	\end{equation}
	where
	$$
	\Phi=\arctan\left(\frac{\Im({\overline U_2}U_1)}{\Re({\overline U_2}U_1)}
	\right).
	$$
	Equation (\ref{eq-ineq-sph}) between imaginary quaternions is equivalent to the following system of three real equations for the variables $\Phi$, $\psi_i$, and $\beta_{1,i},\beta_{2,i}$, $i=1,2$:
	\begin{eqnarray*}
		\sin\psi_1\cos\beta_{1,1}\cos\beta_{2,1}-\sin\psi_2\cos\beta_{1,2}\cos\beta_{2,2}&=&\tan\Phi(\cos\psi_1+\cos\psi_2),\\
		\sin\psi_1\cos\beta_{1,1}\sin\beta_{2,1}-\sin\psi_2\cos\beta_{1,2}\sin\beta_{2,2}&=&0,\\
		\sin\psi_1\sin\beta_{1,1}-\sin\psi_2\sin\beta_{1,2}&=&0.
	\end{eqnarray*}

	\section{A relation between Cygan spheres}\label{sec-relation}
	In this section, we prove the following proposition:
	\begin{proposition}\label{prop-diam}
		Let 
		$p_1=(\zeta_1,v_1)=\left( r\sqrt{\cos\psi_1} U_1,\, r^2 \sin\psi_1 u_{2,1} \right)$ 
		be a point on the Cygan sphere $S_r(0, 0)$. Then the Cygan sphere $S_r(0, 0)$ is entirely contained in the Cygan sphere $S_{d_r}(p_1)$, where $$d_r \geq 2^{1/4}r \left( \left(1 - \sin \psi_1 \right)^{1/3} + \left(1 + \sin \psi_1 \right)^{1/3} \right)^{3/4}.$$ 
	\end{proposition}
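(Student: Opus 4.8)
The plan is to fix $p_1$ on $S_r(0)$ and bound the Cygan distance $\rho_0(p_1,p_2)$ uniformly over all $p_2\in S_r(0)$; any $d_r$ dominating this supremum then gives the asserted containment, in the sense that every $p_2\in S_r(0)$ satisfies $\rho_0(p_1,p_2)\le d_r$. Since both points lie on $S_r(0)$ their Kor\'anyi radii coincide, $r_1=r_2=r$, so estimate~\eqref{eq-sph-est} applies and yields $\rho_0^2(p_1,p_2)\le r^2 g(\psi_2,\phi)$, where
\[
g(\psi_2,\phi)=\sqrt{2\bigl(1+\cos\psi_1\cos\psi_2+\sin\psi_1\sin\psi_2\cos\phi\bigr)}+2\sqrt{\cos\psi_1\cos\psi_2}.
\]
Here $\psi_1$ is the fixed Kor\'anyi angle of $p_1$, while $\psi_2$ and $\phi$ range freely as $p_2$ ranges over the sphere. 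Hence it suffices to take $d_r=r\sqrt{\max g}$ and to prove $\max g=\sqrt2\,\sigma^{3/2}$, where I abbreviate $\sigma=(1-\sin\psi_1)^{1/3}+(1+\sin\psi_1)^{1/3}$: this gives precisely $d_r=2^{1/4}r\,\sigma^{3/4}$, and any larger value works a fortiori.

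First I would perform two harmless reductions. The angle $\phi$ enters only through $\sin\psi_1\sin\psi_2\cos\phi$ beneath an increasing square root, so the supremum over $\phi$ is attained when this term equals $|\sin\psi_1\sin\psi_2|$. Since the expression defining $\sigma$ is invariant under $\sin\psi_1\mapsto-\sin\psi_1$, and since only $|\psi_2|$ survives the $\phi$-optimization, I may assume $\psi_1,\psi_2\in[0,\pi/2]$. Using $1+\cos(\psi_1-\psi_2)=2\cos^2\tfrac{\psi_1-\psi_2}{2}$, the problem collapses to maximizing the single-variable function
\[
g(\psi_2)=2\cos\tfrac{\psi_1-\psi_2}{2}+2\sqrt{\cos\psi_1\cos\psi_2},\qquad \psi_2\in[0,\pi/2].
\]

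The heart of the matter is this one-variable maximization, and it is where I expect the real work to lie. Differentiating gives the critical condition $\sin\tfrac{\psi_1-\psi_2}{2}=\sin\psi_2\sqrt{\cos\psi_1/\cos\psi_2}$, which is transcendental; the key point is that its \emph{value} is nonetheless algebraic. Put $C=\cos^2\psi_1$ and $W=\cos\psi_1\cos\psi_2$. Squaring the critical relation gives $1-\cos^2\tfrac{\psi_1-\psi_2}{2}=C/W-W$, while the half-angle identity gives $2\cos^2\tfrac{\psi_1-\psi_2}{2}=1+W+\sin\psi_1\sin\psi_2$; eliminating the half-angle yields $\sin\psi_1\sin\psi_2=1+W-2C/W$, and squaring this (using $\sin^2\psi_i=1-\cos^2\psi_i$) produces a quartic in $W$ that factors as
\[
(W-C)\bigl(W^3+3CW^2-4C^2\bigr)=0.
\]
The spurious factor $W=C$ corresponds to $\psi_2=\psi_1$, which is not a genuine critical point (there $g'=-\sin\psi_1<0$); the interior maximizer is therefore the relevant positive root of $W^3+3CW^2-4C^2=0$. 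The depressed form $z^3-3C^2z+2C^2(C-2)=0$, obtained via $W=z-C$, is solved by Cardano, and the discriminant simplifies through the identity $2-\cos^2\psi_1\pm2\sin\psi_1=(1\pm\sin\psi_1)^2$ — this is exactly where the cube roots $(1\pm\sin\psi_1)^{1/3}$ appear — giving $W=2C^{2/3}/\sigma$. Substituting $Q=\sqrt W$ and $P=\sqrt{1+W-C/W}$ back into $g=2(P+Q)$ and clearing the radicals using the defining cubic $\sigma^3=2+3C^{1/3}\sigma$ of $\sigma$ then yields $\max g=\sqrt2\,\sigma^{3/2}$, hence $d_r=r\sqrt{\max g}=2^{1/4}r\,\sigma^{3/4}$ as claimed.

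The main obstacle is thus not the differentiation but the algebra that makes the critical value collapse to the stated cube-root form: factoring out the spurious root, applying Cardano while recognising the perfect square $(1\pm\sin\psi_1)^2$, and verifying that the final back-substitution closes — which it does precisely because $\sigma$ solves $\sigma^3-3C^{1/3}\sigma-2=0$. One must also confirm that the cubic's root dominates the boundary values $g(0)=2\cos\tfrac{\psi_1}{2}+2\sqrt{\cos\psi_1}$ and $g(\pi/2)=2\cos\bigl(\tfrac{\psi_1}{2}-\tfrac{\pi}{4}\bigr)$, so that the interior critical point is indeed the global maximum. These routine but lengthy manipulations are naturally isolated as the computational lemma of Section~\ref{sec-appendix}; as a sanity check the bound is sharp in the extreme cases, giving $d_r=2r$ when $\sin\psi_1=0$ and $d_r=\sqrt2\,r$ when $\sin\psi_1=1$.
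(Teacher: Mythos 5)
Your proposal is correct, and I checked the algebra at its heart: the quartic in $W$ does factor as $(W-C)\bigl(W^3+3CW^2-4C^2\bigr)=0$, Cardano together with the identity $2-\cos^2\psi_1\pm2\sin\psi_1=(1\pm\sin\psi_1)^2$ gives $W=2C^{2/3}/\sigma$, and the back-substitution closes because $(P+Q)^2=1+\tfrac{3}{2}C^{1/3}\sigma=\sigma^3/2$. Your argument follows the paper's skeleton — reduce via estimate \eqref{eq-sph-est} to maximizing $\eta_{\psi_1}(\psi_2,\phi)$ over the sphere — but executes that maximization by a genuinely different route. The paper's Lemma \ref{lem-bound} treats it as a true two-variable problem: it bounds $\eta_{\psi_1}$ on the boundary pieces $\phi\in\{0,\pi\}$ (via Lemma \ref{lem-trig}) and $\psi_2=\pm\pi/2$, and then must locate the interior critical point $(0,\pi/2)$ and classify it as a saddle through a Hessian computation. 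You instead eliminate $\phi$ at the outset by monotonicity in $\cos\phi$, which collapses everything to a single one-variable problem and makes the saddle-point analysis unnecessary — a real simplification, and arguably more robust than the paper's interior analysis. For the residual one-variable problem, the paper's Lemma \ref{lem-trig} works in half-angle variables ($\theta=\psi_2/2$, $\alpha=\psi_1/2$) and solves the critical equation by a trigonometric factorization ending in $\frac{\cos\theta+\sin\theta}{\cos\theta-\sin\theta}=\bigl(\frac{\cos\alpha-\sin\alpha}{\cos\alpha+\sin\alpha}\bigr)^{1/3}$, whereas you substitute $W=\cos\psi_1\cos\psi_2$ and run Cardano; these are equivalent (the same cube roots $(1\pm\sin\psi_1)^{1/3}$ emerge), yours being the more systematic derivation, the paper's the slicker one once the factorization is guessed. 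The two loose ends you deferred are genuinely one-liners rather than lengthy: $g(\pi/2)=2\cos(\psi_1/2-\pi/4)\le 2\le\sqrt{2}\,\sigma^{3/2}$ because $\sigma^3=2+3C^{1/3}\sigma\ge 2$, and the endpoint $\psi_2=0$ cannot carry the maximum because $g'(0)=\sin(\psi_1/2)>0$ when $\psi_1\neq 0$ (while for $\psi_1=0$ one has $g(0)=4=\sqrt{2}\,\sigma^{3/2}$, which is exactly the claimed value). With those remarks inserted, your argument is complete.
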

	To establish this result, we need the following lemmas, whose proofs are found in the Appendix.
	\begin{lemma}\label{lem-trig}
		Let $\alpha\in [-\pi/4,\pi/4]$, and define the function $f_\alpha:I=[-\pi/4,\pi/4]\longrightarrow {\mathbb R}$ by
		$$
		f_\alpha(\theta)=\cos(\theta+\alpha)+\sqrt{\cos(2\theta)\cos(2\alpha)},\quad \theta\in I.$$
		Then the maximum of $f_\alpha$ on this interval is given by
		$$\max_{\theta\in I}f_\alpha(\theta) =\frac{1}{\sqrt{2}}\Bigl(\bigl(1-\sin(2\alpha)\bigr)^{1/3}+\bigl(1+\sin(2\alpha)\bigr)^{1/3}\Bigr)^{3/2}.$$
	\end{lemma}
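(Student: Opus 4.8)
The plan is to locate the maximum of $f_\alpha$ by one–variable calculus and then identify the critical value through an algebraic elimination that collapses to a cubic; the two cube roots in the stated formula will then arise from Cardano's formula. First I would settle domain and symmetry. Since $\alpha,\theta\in[-\pi/4,\pi/4]$ we have $\cos(2\alpha)\ge 0$ and $\cos(2\theta)\ge 0$, so the radicand is nonnegative and $f_\alpha$ is continuous on the compact interval $I$, hence attains its maximum. The identity $f_{-\alpha}(\theta)=f_\alpha(-\theta)$ shows that $\max f_\alpha$ is even in $\alpha$, as is the right-hand side of the claimed formula, so I may assume $\alpha\in[0,\pi/4)$ and set $\sigma=\sin(2\alpha)\in[0,1)$ and $D=\cos(2\alpha)>0$. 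The degenerate case $D=0$ (that is $\alpha=\pi/4$) is immediate, since then $f_\alpha(\theta)=\cos(\theta+\alpha)$ has maximum $1$, in agreement with the formula.

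Next I would differentiate. Writing
$$f_\alpha'(\theta)=-\,\frac{\sin(\theta+\alpha)\sqrt{\cos2\theta}+\sin2\theta\,\sqrt{\cos2\alpha}}{\sqrt{\cos2\theta}},$$
one checks that $f_\alpha'(\theta)\to+\infty$ as $\theta\to-\pi/4^{+}$ and $f_\alpha'(\theta)\to-\infty$ as $\theta\to\pi/4^{-}$, so the maximum is attained at an interior point $\theta^\ast$ where
$$\sin(\theta+\alpha)\sqrt{\cos2\theta}=-\sin2\theta\,\sqrt{\cos2\alpha}. \qquad (\ast)$$
Squaring $(\ast)$ and using $\sin^2(\theta+\alpha)=1-\cos^2(\theta+\alpha)$ and $\sin^2 2\theta=1-\cos^2 2\theta$ gives, with $p=\cos(\theta+\alpha)$ and $C=\cos2\theta$, the relation $(1-p^2)C=(1-C^2)D$. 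Since $M:=f_\alpha(\theta^\ast)=p+\sqrt{CD}$ with $\sqrt{CD}\ge 0$, the definition of $M$ gives $(M-p)^2=CD$, so $C=(M-p)^2/D$; eliminating $C$ between the two relations yields the intermediate identity
$$(M-p)^2\,\bigl(1+M^2-2Mp\bigr)=D^2. \qquad (IV)$$

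The heart of the argument, and the step I expect to be the main obstacle, is to remove the remaining variable $p$. For this I would use the purely trigonometric relation $\cos2\theta=(2p^2-1)\cos2\alpha+2\cos(\theta+\alpha)\sin(\theta+\alpha)\,\sin2\alpha$, obtained from $2\theta=2(\theta+\alpha)-2\alpha$; substituting $C=(M-p)^2/D$ and squaring to clear the $\sin(\theta+\alpha)=\sqrt{1-p^2}$ term produces a second polynomial relation in $p,M,D$. Eliminating $p$ between this relation and $(IV)$ is an elementary but lengthy resultant computation; carrying it out, and keeping track of signs with the help of $M\ge 1+\cos2\alpha\ge 1$ (which holds because $\theta=-\alpha\in I$ gives $f_\alpha(-\alpha)=1+\cos2\alpha$), collapses everything, after discarding the extraneous factors introduced by squaring, to the single sextic
$$(M^2-1)^3=\tfrac{27}{4}\cos^2(2\alpha)\,M^2. \qquad (\star)$$

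Finally I would solve $(\star)$. Put $S=(1-\sigma)^{1/3}+(1+\sigma)^{1/3}$; expanding $S^3$ and using $(1-\sigma)(1+\sigma)=\cos^2 2\alpha$ gives $S^3=2+3\cos^{2/3}(2\alpha)\,S$, whence $N_0:=\tfrac12 S^3=1+\tfrac32\cos^{2/3}(2\alpha)\,S$ satisfies $N_0-1=\tfrac32\cos^{2/3}(2\alpha)\,S$ and therefore $(N_0-1)^3=\tfrac{27}{8}\cos^2(2\alpha)\,S^3=\tfrac{27}{4}\cos^2(2\alpha)\,N_0$; thus $N_0$ solves $(\star)$ with $M^2=N_0$. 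Viewing $(\star)$ as the cubic $g(N)=N^3-3N^2+\bigl(3-\tfrac{27}{4}\cos^2 2\alpha\bigr)N-1=0$ in $N=M^2$, its critical points are $N=1\pm\tfrac{3}{2}\cos2\alpha$, and the clean evaluation $g\bigl(1+t\bigr)=t^3-\tfrac{27}{4}\cos^2(2\alpha)(1+t)$ shows both critical values are negative for $0<\cos2\alpha<1$; hence $g$ has a unique real root, which lies beyond $1+\tfrac32\cos2\alpha>1$ and therefore equals $M^2$. Consequently
$$M=2^{-1/2}S^{3/2}=\frac{1}{\sqrt2}\bigl((1-\sin2\alpha)^{1/3}+(1+\sin2\alpha)^{1/3}\bigr)^{3/2},$$
which is exactly the asserted maximum.
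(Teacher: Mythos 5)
Your skeleton is sound and parts of it are genuinely nice: the endpoint blow-up argument for $f_\alpha'$ (derivative tends to $+\infty$ at $-\pi/4$ and $-\infty$ at $\pi/4$ when $\cos 2\alpha>0$) forces the maximum into the interior more cleanly than the paper's own boundary comparison, the reduction to relation $(IV)$ is correct algebra, and your endgame is correct: \emph{granting} the sextic $(\star)$, the computation $g(1+t)=t^3-\tfrac{27}{4}\cos^2(2\alpha)(1+t)$, the negativity of both critical values of $g$, and the verification that $N_0=\tfrac12 S^3$ is a root do identify $M^2$ uniquely and give the stated value.

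But there is a genuine gap precisely at what you yourself call the heart of the argument: the passage from the two polynomial relations in $(p,M)$ to $(\star)$ is asserted, not proved. Relation $(IV)$ is cubic in $p$; the squared form of $\cos 2\theta=(2p^2-1)\cos 2\alpha+2p\sqrt{1-p^2}\,\sin 2\alpha$ (after substituting $C=(M-p)^2/D$) is quartic in $p$; their resultant is a large polynomial in $(M,D)$ which you have neither computed nor factored, and the phrase ``after discarding the extraneous factors introduced by squaring'' hides exactly the content of the lemma --- you never identify which extraneous factors occur, nor show how the bound $M\ge 1+\cos 2\alpha$ excludes them at the maximizer. (The identity $(\star)$ is in fact true at the maximum, so your route could in principle be completed, but as written the decisive step is a conjecture.) The paper avoids elimination entirely: after squaring the critical-point equation it factors it explicitly as $\sin(\alpha-\theta)$ times a cubic expression, deducing that critical points satisfy $\frac{\cos\theta+\sin\theta}{\cos\theta-\sin\theta}=\left(\frac{\cos\alpha-\sin\alpha}{\cos\alpha+\sin\alpha}\right)^{1/3}$; it then exhibits the critical point $\theta_0$ in closed form via $\lambda=(\cos\alpha-\sin\alpha)^{1/3}$, $\mu=(\cos\alpha+\sin\alpha)^{1/3}$ and evaluates $f_\alpha(\theta_0)=\frac{(\lambda^2+\mu^2)^{3/2}}{\sqrt 2}$ directly, so the cube roots arise from undoing an exact cube rather than from Cardano, and no resultant or extraneous-factor bookkeeping is needed. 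To repair your proof you must either actually carry out and factor the resultant and rule out its spurious factors, or replace the elimination step by an explicit factorization of the critical-point equation as the paper does.
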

	\begin{lemma}\label{mam-bound}
		For $\psi_1 \in \left[-\frac{\pi}{2}, \frac{\pi}{2}\right]$, define the function $h$ given by
		$$h(\psi_1) =  \sqrt{2} \left( \left(1 - \sin \psi_1 \right)^{1/3} + \left(1 + \sin \psi_1 \right)^{1/3} \right)^{3/2}.$$ Then the minimum value of $h(\psi_1)$ occurs at $\psi_1=\pm \frac{\pi}{2}$ and it is equal to $2$. The maximum value occurs at $\psi_1=0$ and is equal to $4$. 
	\end{lemma}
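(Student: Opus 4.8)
The plan is to reduce this one-variable optimization to an elementary monotonicity argument. First I would substitute $s=\sin\psi_1$, which ranges over $[-1,1]$ as $\psi_1$ ranges over $[-\pi/2,\pi/2]$, and set
$$
g(s)=(1-s)^{1/3}+(1+s)^{1/3},
$$
so that $h=\sqrt{2}\,g(s)^{3/2}$. Since $g(s)\ge 0$ and the map $x\mapsto\sqrt{2}\,x^{3/2}$ is strictly increasing on $[0,\infty)$, the extrema of $h$ occur exactly where the extrema of $g$ occur; it therefore suffices to locate the maximum and minimum of $g$ on $[-1,1]$.

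Next I would exploit the symmetry $g(-s)=g(s)$ to restrict attention to $s\in[0,1]$. On the open interval $(0,1)$ I would differentiate,
$$
g'(s)=\frac{1}{3}\left((1+s)^{-2/3}-(1-s)^{-2/3}\right),
$$
and observe that for $s\in(0,1)$ we have $1+s>1-s>0$; since $x\mapsto x^{-2/3}$ is strictly decreasing on $(0,\infty)$, this forces $g'(s)<0$. Hence $g$ is strictly decreasing on $[0,1]$, so on that interval it attains its maximum at $s=0$ and its minimum at $s=1$. By evenness, the global maximum of $g$ on $[-1,1]$ is at $s=0$ and the global minimum is at $s=\pm 1$.

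Finally I would evaluate $g(0)=2$ and $g(\pm 1)=2^{1/3}$, whence $h=\sqrt 2\,g^{3/2}$ yields the maximum value $\sqrt 2\cdot 2^{3/2}=4$ at $s=0$, that is at $\psi_1=0$, and the minimum value $\sqrt 2\cdot(2^{1/3})^{3/2}=\sqrt 2\cdot\sqrt 2=2$ at $s=\pm1$, that is at $\psi_1=\pm\pi/2$. I do not expect a genuine obstacle here; the only point requiring care is the sign of $g'$, which rests entirely on the monotonicity of $x\mapsto x^{-2/3}$, together with the observation that composing with the increasing function $x\mapsto\sqrt 2\,x^{3/2}$ preserves the locations of the extrema.
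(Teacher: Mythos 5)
Your proof is correct. It works with the same auxiliary function as the paper's proof, but the logical route is different: the paper stays in the variable $\psi_1$, computes $g'(\psi_1)=\tfrac{1}{3}\cos\psi_1\left((1+\sin\psi_1)^{-2/3}-(1-\sin\psi_1)^{-2/3}\right)$, observes that it vanishes in the open interval only at $\psi_1=0$, and then runs a second-derivative test ($g''(0)=-4/9<0$) to identify the maximum, reading off the minimum from the boundary values $h(\pm\pi/2)=2$ computed at the outset. You instead substitute $s=\sin\psi_1$, use the evenness of $g$, and settle everything by the sign of $g'$ on $(0,1)$, which follows at once from the monotonicity of $x\mapsto x^{-2/3}$. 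This buys two things. First, you never need the second derivative at all. Second, your argument is logically tighter: strict decrease of $g$ on $[0,1]$ simultaneously pins down the global maximum at $s=0$ and the global minimum at $s=\pm1$, whereas the paper's second-derivative test by itself only certifies a \emph{local} maximum, and the passage to the global statement rests implicitly on $\psi_1=0$ being the unique interior critical point combined with the boundary comparison. The only step worth making explicit in your write-up is the standard fact you invoke when passing from $g'<0$ on the open interval $(0,1)$ to strict decrease on the closed interval $[0,1]$ (continuity at the endpoints suffices, even though $g'$ blows up as $s\to 1^-$); with that said, your proof is complete.
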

	\begin{lemma}\label{lem-bound}
		For given $\psi_1 \in \left[-\frac{\pi}{2}, \frac{\pi}{2}\right]$, define the function
		$$\eta_{\psi_1}(\psi_2, \phi) : A=\left[-\frac{\pi}{2}, \frac{\pi}{2}\right] \times [0, \pi] \to \mathbb{R},$$  
		by
		\begin{align*}
			\eta_{\psi_1}(\psi_2, \phi) &= \sqrt {2(1+ \cos \psi_1\cos \psi_2 + \cos \phi \sin \psi_1 \sin \psi_2 )  }+ 2 \sqrt{\cos\psi_2 \cos\psi_1},
		\end{align*}
		for all $(\psi_2,\phi)\in A.$
		Then the maximum value of $\eta_{\psi_1}(\psi_2, \phi)$ over $A$ is given by
		$$\max_{(\psi_2, \phi)\in A} \eta_{\psi_1}(\psi_2, \phi) =  \sqrt{2} \left( \left(1 - \sin \psi_1 \right)^{1/3} + \left(1 + \sin \psi_1 \right)^{1/3} \right)^{3/2}.$$
	\end{lemma}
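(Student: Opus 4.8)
The plan is to compute the maximum in two stages: first over $\phi$, then over $\psi_2$, and finally to recognize the resulting one-variable problem as an instance of Lemma~\ref{lem-trig}. Since $A$ is a product domain, the joint maximum factors as the iterated maximum $\max_{\psi_2}\max_{\phi}$, so this staging is harmless.

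First I would optimize over $\phi$. Only the summand $\cos\phi\,\sin\psi_1\sin\psi_2$ depends on $\phi$, and as $\phi$ ranges over $[0,\pi]$ the factor $\cos\phi$ sweeps $[-1,1]$; hence maximizing over $\phi$ replaces this term by $|\sin\psi_1|\,|\sin\psi_2|$, yielding
$$\max_{\phi\in[0,\pi]}\eta_{\psi_1}(\psi_2,\phi) = \sqrt{2\bigl(1+\cos\psi_1\cos\psi_2+|\sin\psi_1|\,|\sin\psi_2|\bigr)}+2\sqrt{\cos\psi_1\cos\psi_2}.$$
This expression depends on $\psi_1,\psi_2$ only through $\cos\psi_i$ and $|\sin\psi_i|$, which are even, and the target value $h(\psi_1)$ from Lemma~\ref{mam-bound} is likewise even in $\psi_1$; so without loss of generality I may assume $\psi_1,\psi_2\in[0,\pi/2]$. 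On this range the absolute values drop, and using $1+\cos(\psi_1-\psi_2)=2\cos^2\!\bigl(\tfrac{\psi_1-\psi_2}{2}\bigr)$ together with $\tfrac{\psi_1-\psi_2}{2}\in[-\pi/4,\pi/4]$, the quantity to be maximized over $\psi_2$ becomes
$$g(\psi_2) := 2\cos\Bigl(\tfrac{\psi_1-\psi_2}{2}\Bigr) + 2\sqrt{\cos\psi_1\cos\psi_2}.$$

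Next I would make the substitution $\alpha=\psi_1/2\in[0,\pi/4]$ (fixed) and $\theta=-\psi_2/2\in[-\pi/4,0]$, under which $\tfrac{\psi_1-\psi_2}{2}=\theta+\alpha$ and $\cos\psi_1\cos\psi_2=\cos 2\alpha\,\cos 2\theta$, so that $g(\psi_2)=2f_\alpha(\theta)$ with $f_\alpha$ exactly the function of Lemma~\ref{lem-trig}. Thus $\max_{\psi_2\in[0,\pi/2]}g = 2\max_{\theta\in[-\pi/4,0]}f_\alpha(\theta)$, and it remains to match this against the value supplied by Lemma~\ref{lem-trig}.

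The one point requiring care — which I expect to be the main (though minor) obstacle — is that Lemma~\ref{lem-trig} furnishes the maximum of $f_\alpha$ over the full interval $[-\pi/4,\pi/4]$, whereas here $\theta$ is confined to $[-\pi/4,0]$. I would close this gap by locating the maximizer: since $f_\alpha'(0)=-\sin\alpha\le 0$ while $f_\alpha'(\theta)\to+\infty$ as $\theta\to-\pi/4^{+}$ (the square-root term $\sqrt{\cos 2\theta\cos 2\alpha}$ has infinite slope there), there is a critical point in $(-\pi/4,0)$; and since $f_\alpha'<0$ throughout $(0,\pi/4)$, the function decreases on $[0,\pi/4]$. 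Hence the global maximum over $[-\pi/4,\pi/4]$ is attained in the subinterval $[-\pi/4,0]$, so restricting to it loses nothing, and
$$\max_{(\psi_2,\phi)\in A}\eta_{\psi_1}(\psi_2,\phi) = 2\cdot\frac{1}{\sqrt 2}\bigl((1-\sin\psi_1)^{1/3}+(1+\sin\psi_1)^{1/3}\bigr)^{3/2} = \sqrt 2\bigl((1-\sin\psi_1)^{1/3}+(1+\sin\psi_1)^{1/3}\bigr)^{3/2},$$
as claimed. As a sanity check, this matches the extreme cases $\psi_1=0$ (maximizer $\theta=0$, value $4$) and $\psi_1=\pm\pi/2$ (maximizer $\theta=-\pi/4$, value $2$) recorded in Lemma~\ref{mam-bound}.
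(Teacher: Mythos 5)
Your proposal is correct, and it reaches the result by a genuinely different route than the paper. The paper performs a two-variable analysis of $\eta_{\psi_1}$ on $A$: it bounds the function on each boundary piece ($\phi=0$, $\phi=\pi$, $\psi_2=\pm\pi/2$), computes the partial derivatives to locate the unique interior critical point $(0,\pi/2)$ for $\psi_1\neq 0$, and classifies it as a saddle via the Hessian determinant, concluding the maximum lies on the boundary --- where the cases $\phi\in\{0,\pi\}$ reduce, via $\theta=\pm\psi_2/2$, $\alpha=\psi_1/2$, to Lemma~\ref{lem-trig} over the \emph{full} interval $[-\pi/4,\pi/4]$. You instead eliminate $\phi$ at the outset by monotonicity of the square root in $\cos\phi$ (replacing the cross term by $|\sin\psi_1||\sin\psi_2|$), invoke parity to restrict to $\psi_1,\psi_2\in[0,\pi/2]$, and then land on Lemma~\ref{lem-trig} restricted to the half-interval $\theta\in[-\pi/4,0]$. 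This buys you a proof with no multivariable calculus at all (no Hessian, no saddle-point classification), and since every step is an equality of iterated maxima, attainment of the stated value is automatic; the price is exactly the half-interval issue you identified, which the paper's route never encounters because it does not restrict the sign of $\psi_2$. Your resolution of that issue is sound: the conclusion only needs that $f_\alpha$ is non-increasing on $[0,\pi/4]$, which follows from $f_\alpha'(\theta)=-\sin(\theta+\alpha)-\sin(2\theta)\sqrt{\cos 2\alpha}/\sqrt{\cos 2\theta}\le -\sin(\theta+\alpha)<0$ there for $\alpha\ge 0$. One cosmetic caveat: your auxiliary claim that $f_\alpha'(\theta)\to+\infty$ as $\theta\to-\pi/4^{+}$, and hence that an interior critical point exists in $(-\pi/4,0)$, fails in the edge cases $\alpha=\pi/4$ (where $\cos 2\alpha=0$ and $f_{\pi/4}$ is monotone, with maximizer at the endpoint $\theta=-\pi/4$) and $\alpha=0$ (where the maximizer is the endpoint $\theta=0$); this does not affect your argument, since the monotonicity on $[0,\pi/4]$ alone justifies $\max_{[-\pi/4,\pi/4]}f_\alpha=\max_{[-\pi/4,0]}f_\alpha$, but you should either delete the critical-point remark or state it only for $\alpha\in(0,\pi/4)$.
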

	
	\noindent{\it Proof of Proposition \ref{prop-diam}.}
	
	Let $p_2=(\zeta_2, v_2)$ be any point on the Cygan sphere $S_r(0, 0)$. From the estimate (\ref{eq-sph-est}), we obtain that 
	\begin{eqnarray*}
		\rho_0(p_1,p_2)
		& \le & r\left(   \sqrt {2(1+ \cos \psi_1~ \cos \psi_2 + \cos \phi ~\sin \psi_1 ~\sin \psi_2 )  }+ 2 \sqrt{\cos\psi_2 \cos\psi_1} \right)^{\frac{1}{2}},\\
	\end{eqnarray*}
	where $\cos \phi=u_{2,1} . u_{2,2} $.
	From Lemma \ref{lem-bound} we subsequently deduce
	\begin{eqnarray*}
		\rho_0(p_1,p_2) & \le &  {2^{1/4}} r\left( \left(1 - \sin \psi_1 \right)^{1/3} + \left(1 + \sin \psi_1 \right)^{1/3} \right)^{3/4}.
	\end{eqnarray*}
	\qed
	
	\section{Proof of Theorem \ref{thm-main-improved}}\label{sec-main}
	We first give the definition of isometric spheres in the context of quaternionic hyperbolic geometry in Section \ref{sec-isometric}. The proof of relation (\ref{main-1}) of Theorem \ref{thm-main-improved} is given here. Quaternionic fans and a fundamental domain bounded by a quaternionic fan are discussed in Section \ref{sec-fan}, and relations (\ref{main-2}) and (\ref{main-3}) are proved in Proposition \ref{prop-fan-domain}.
	\subsection{Isometric spheres}\label{sec-isometric}
	In \cite{gol}, Goldman extended the concept of isometric spheres from real hyperbolic geometry to the complex hyperbolic case. Later, Parker and Cao defined isometric spheres in the setting of quaternionic hyperbolic geometry, see in \cite{wp1}. Let $P\in\Sp(2,1)$ and let $P^{-1}$ be its inverse. Suppose 
	\begin{equation}\label{eq P}
		P=\left(\begin{matrix} a & b & c \\ d & e &f  \\ g & h & j \end{matrix}\right),
		\qquad
		P^{-1}=\left(\begin{matrix} \bar j & \bar f & \bar c \\ \bar h & \bar e &\bar b  \\ \bar g & \bar d & \bar a \end{matrix}\right).
	\end{equation}
	If $g\neq 0$ then $P$ does not fix $\infty$. Moreover, $P(\infty)$ and $P^{-1}(\infty)$ lies in $\partial \mathbb{H}^2_{\mathbb{H}} \setminus\{\infty\}$ have quaternionic Heisenberg coordinates $$ P(\infty)= \left(dg^{-1}/{\sqrt 2},\Im(ag^{-1}) \right), \qquad
	P^{-1}(\infty)= \left(\bar h {\bar g}^{-1}/{\sqrt 2},-\Im(j{\bar g}^{-1}) \right).
	$$
	\begin{definition}
		The {\it isometric sphere $I_P$ corresponding to an element $P$} in $\Sp(2,1)$ with $P(\infty)\neq \infty$ is the Cygan sphere with center $P^{-1}(\infty)$, and radius $r_P=1/\sqrt{|g|}$. Similarly, the {\it isometric sphere $I_{P^{-1}}$  } corresponding to $P^{-1}$ is
		the Cygan sphere with center $P(\infty)$ and radius $r_{P^{-1}}=r_P=1/\sqrt{|g|}$.  
	\end{definition}
	The involution $\iota$, defined by Eq.(\ref{eq-iota}) which interchanges $o$ and $\infty$, maps the Cygan sphere with center $o$ and radius $r$ to the Cygan sphere with center $o$ and radius $1/r$.
	Let $A$ and $B$, as defined in \eqref{eq-A-B}, be the associated Heisenberg translations fixing $\infty$ and $o=(0,0)$ respectively.
	The Cygan isometric sphere of $B$ has radius $r_B=1/K(p_1)$, $p_1=(\zeta_1,v_1)$, and center 
	$$
	B^{-1}(\infty)=\left( -\zeta_1(\kappa(p_1^{-1}))^{-1},
	\,  \frac{v_1}{K^4(p_1)}\right).$$
	Similarly, the isometric sphere of $B^{-1}$ has radius $r_{B^{-1}}=r_B=1/K(p_1)$ and center
	$$
	B(\infty)=\left( \zeta_1(\kappa(p_1))^{-1},\  \frac{-v_1}{K^4(p_1)}\right).
	$$ 
	Note that the origin $o=(0,0)$, which is the fixed point of $B$, lies on both of these isometric spheres and they are both tangent at this point. A fundamental domain for the action of the cyclic group $\langle B\rangle$ on the Heisenberg group can thus be described as the region outside both of these isometric spheres.
	\begin{proposition}\label{prop-iso-diam}
		The isometric spheres of $B$ and $B^{-1}$ are contained in the Cygan ball with center $o$ and radius $R_o$ where
		$$
		R_o=\frac{2^{1/4}}{K(p_1)}
		\left(\left(1-\frac{|v_1|}{K^2(p_1)}\right)^{1/3}+\left(1+\frac{|v_1|}{K^2(p_1)}\right)^{1/3}\right)^{3/4}.
		$$
	\end{proposition}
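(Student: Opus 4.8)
The plan is to reduce the claim to Proposition \ref{prop-diam} via the translation-invariance of the Cygan metric. Recall that the isometric sphere of $B$ is the Cygan sphere $S_{r_B}(c_B)$ of radius $r_B=1/K(p_1)$ centered at $c_B=B^{-1}(\infty)$, and that of $B^{-1}$ is $S_{r_B}(c_{B^{-1}})$ centered at $c_{B^{-1}}=B(\infty)$. Since $o$ lies on each of these spheres, both centers satisfy $\rho_0(c,o)=K(c)=r_B$, so they lie on the origin-centered Cygan sphere $S_{r_B}(o)$. I would confirm this from the explicit Heisenberg coordinates of $B^{\pm1}(\infty)$ together with the identity $|\kappa(p_1^{-1})|=|\kappa(p_1)|=K^2(p_1)$; the same computation records that each center $c$ has $\zeta$-component of modulus $|\zeta_1|/K^2(p_1)$ and $v$-component of modulus $|v_1|/K^4(p_1)$.

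Writing a center in Koranyi coordinates as $c=(r_B\sqrt{\cos\psi}\,U,\,r_B^2\sin\psi\,u_2)$, the relation $|v_c|=r_B^2|\sin\psi|$ yields $|\sin\psi|=|v_1|/K^2(p_1)$; the Heisenberg inverse $c^{-1}=(-\zeta_c,-v_c)$ has the same gauge $K(c^{-1})=r_B$ and the same $|\sin\psi|$. To bound $\rho_0(p,o)$ for a point $p$ on the isometric sphere $S_{r_B}(c)$, I would apply the left translation $T_{c^{-1}}$, a Cygan isometry sending $c\mapsto o$ and $S_{r_B}(c)\mapsto S_{r_B}(o)$. Then $\rho_0(p,o)=\rho_0(T_{c^{-1}}p,\,c^{-1})$, where both $T_{c^{-1}}p$ and $c^{-1}$ lie on $S_{r_B}(o)$. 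Proposition \ref{prop-diam}, with marked point $c^{-1}$ on $S_{r_B}(o)$, bounds this by $2^{1/4}r_B\bigl((1-\sin\psi)^{1/3}+(1+\sin\psi)^{1/3}\bigr)^{3/4}$. As this expression is even in $\sin\psi$, I replace $\sin\psi$ by $|\sin\psi|=|v_1|/K^2(p_1)$ and insert $r_B=1/K(p_1)$ to recover exactly $R_o$; the identical argument with $c=c_{B^{-1}}$ handles the second isometric sphere.

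The main obstacle is the direction of the inequality in Proposition \ref{prop-diam}: it bounds the distance from a marked point of an origin-centered sphere to the remaining points of that same sphere, whereas here one must bound the distance from the origin to points of a sphere centered at $c$. The translation $T_{c^{-1}}$ reconciles the two, and the crux is the short verification that $K(c)=r_B$ and $|\sin\psi|=|v_1|/K^2(p_1)$, which is precisely what pins down the constant $R_o$.
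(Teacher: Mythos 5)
Your proposal is correct and follows essentially the same route as the paper's proof: both use translation-invariance of the Cygan metric to reduce the isometric spheres of $B^{\pm 1}$ (which pass through $o$ and have radius $r_B=1/K(p_1)$) to the origin-centered sphere setting of Proposition \ref{prop-diam}, and then compute $|\sin\psi|=|v_1|/K^2(p_1)$ for the marked point to pin down $R_o$. Your write-up is in fact slightly more careful than the paper's, since you make explicit that the marked point is $c^{-1}$ (not $c$) and that the evenness of the bound in $\sin\psi$ resolves the sign ambiguity the paper glosses over.
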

	\begin{proof}
		The Cygan distance is invariant under Heisenberg translations. Therefore, translating by $p_1^{-1}$ we have a Cygan sphere passing through the origin, with center $p_1^{-1}$ and radius $r$. 
		
		The isometric spheres of $B$ and $B^{-1}$ are examples of such spheres with radius
		$r_{B^{-1}}=r_B=K(p_1)^{-1}$.
		Now, by comparing the center of $B$ with $p_1^{-1}$, we get 
		$$
		r_B\sqrt{\cos(\psi_1)} (\cos\theta_1+u_1 \sin\theta_1)={\zeta_1} (\kappa(p_1))^{-1}.
		$$
		From this, we deduce
		$$\cos(\psi_1)=\frac{| \zeta_1|^2}{K^2(p_1)}, \quad
		\sin(\psi_1)=\frac{|v_1|}{K^2(p_1)}.
		$$
		By applying Proposition \ref{prop-diam}, we obtain an upper bound for the Cygan distance between points $p_i=(\zeta_i,v_i)\in S_r(0,0)$, $i=1,2$:
		\begin{eqnarray*}
			\rho_0(p_1,p_2) & \le & {2^{1/4}}r \left( \left(1 - \frac{|v_1|}{K^2(p_1)} \right)^{1/3} + \left(1 + \frac{|v_1|}{K^2(p_1)} \right)^{1/3} \right)^{3/4}.
		\end{eqnarray*}
		Therefore, the isometric spheres of $B$ is contained in the Cygan ball with center $o$ and radius $R_o$, where
		\begin{eqnarray*}
			R_o & = & \frac{2^{1/4}}{K(p_1)}
			\left(\left(1-\frac{|v_1|}{K^2(p_1)}\right)^{1/3}+\left(1+\frac{|v_1|}{K^2(p_1)}\right)^{1/3}\right)^{3/4},
		\end{eqnarray*}
		and the same is true for the isometric sphere of $B^{-1}.$ This completes the proof.
	\end{proof}
	We will use the following version of Klein's combination theorem on the boundary of quaternionic hyperbolic space. 
	\begin{proposition}{\label{KCT}}
		Let $A$ and $B$ be the Heisenberg translations fixing $\infty$ and $o$ respectively, given by \eqref{eq-A-B}. Suppose $D_A\subset \partial \mathbb{H}^2_{\mathbb{H}}$ and $D_B\subset \partial \mathbb{H}^2_{\mathbb{H}}$ are fundamental domains for $\langle A\rangle$ and $\langle B\rangle$, respectively. If the interiors of the domains intersect $D_A^\circ\cap D_B^\circ\neq \emptyset$, and their closures satisfy $\overline{D}_A\cup\overline{D}_B=\partial \mathbb{H}^2_{\mathbb{H}}$, then the subgroup $\langle A,B \rangle$ is free and discrete.
	\end{proposition}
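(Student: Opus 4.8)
The plan is to recognize Proposition~\ref{KCT} as an instance of the classical Klein--Maskit combination theorem and to prove it by a ping-pong argument on $\partial \mathbb{H}^2_{\mathbb{H}}$. I would take as ping-pong sets the open exteriors of the two domains, $X_A = \partial \mathbb{H}^2_{\mathbb{H}}\setminus\overline{D}_A$ and $X_B = \partial \mathbb{H}^2_{\mathbb{H}}\setminus\overline{D}_B$. The covering hypothesis $\overline{D}_A\cup\overline{D}_B = \partial \mathbb{H}^2_{\mathbb{H}}$ gives at once $X_A\cap X_B = \emptyset$, and it also yields $X_B\subseteq\overline{D}_A$ and $X_A\subseteq\overline{D}_B$; since $X_A,X_B$ are open this upgrades (for regular domains) to $X_B\subseteq D_A^{\circ}$ and $X_A\subseteq D_B^{\circ}$. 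Nonemptiness of $X_A$ and $X_B$ follows because $A$ and $B$ have infinite order: if $\overline{D}_A$ were all of $\partial \mathbb{H}^2_{\mathbb{H}}$ then $D_A^{\circ}$ and its disjoint translate $A(D_A^{\circ})$ would be two dense open sets, which is impossible.

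Next I would establish the ping-pong containments from the defining tiling property of a fundamental domain. For $n\neq 0$ the set $A^{n}(D_A^{\circ})$ is open and has interior disjoint from $D_A^{\circ}$; an open set disjoint from $D_A^{\circ}$ cannot meet $\overline{D_A^{\circ}}=\overline{D}_A$, so $A^{n}(D_A^{\circ})\subseteq X_A$. Combining this with $X_B\subseteq D_A^{\circ}$ gives $A^{n}(X_B)\subseteq X_A$ for every $n\neq 0$, and symmetrically $B^{m}(X_A)\subseteq X_B$ for every $m\neq 0$. These are exactly the hypotheses of the ping-pong lemma.

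With two disjoint nonempty sets satisfying the ping-pong conditions, I would then conclude $\langle A,B\rangle=\langle A\rangle*\langle B\rangle$. Concretely, fix a point $x$ in the common interior $D^{\circ}=D_A^{\circ}\cap D_B^{\circ}$, which is nonempty by hypothesis; since $D^{\circ}\subseteq D_A^{\circ}$ and $D^{\circ}\subseteq D_B^{\circ}$, the containments of the previous paragraph also give $A^{n}(D^{\circ})\subseteq X_A$ and $B^{m}(D^{\circ})\subseteq X_B$ for nonzero exponents. Reading a nontrivial reduced word $w=A^{n_1}B^{m_1}\cdots$ from the right and applying the containments alternately pushes $x$ into $X_A\cup X_B$, which is disjoint from $D^{\circ}$; hence $w(x)\neq x$ and $w\neq 1$. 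As $A$ and $B$ are parabolic of infinite order, this shows $\langle A,B\rangle\cong\mathbb{Z}*\mathbb{Z}$, i.e.\ $A$ and $B$ freely generate. The identical computation shows $g(D^{\circ})\cap D^{\circ}=\emptyset$ for every $g\neq 1$, so $D^{\circ}$ is a nonempty open wandering set.

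Discreteness would then follow from this wandering property: if $\langle A,B\rangle$ were not discrete in $\Sp(2,1)$ there would be $g_k\to \mathrm{Id}$ with $g_k\neq 1$, and by continuity of the action on $\partial \mathbb{H}^2_{\mathbb{H}}$ we would have $g_k(x)\to x$ for $x\in D^{\circ}$, forcing $g_k(x)\in D^{\circ}$ for large $k$ and contradicting $g_k(D^{\circ})\cap D^{\circ}=\emptyset$. I expect the main obstacle to be the bookkeeping at the shared boundaries of the two domains: making the step from ``translates of a fundamental domain have disjoint interiors'' to the clean open containments $A^{n}(X_B)\subseteq X_A$ fully rigorous is where one must use that the domains are regular (equal to the closures of their interiors), and this is the delicate point, together with the final passage from the wandering/tiling picture to genuine topological discreteness in $\Sp(2,1)$.
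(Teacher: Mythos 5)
The paper never proves Proposition~\ref{KCT}: it is introduced with the words ``we will use the following version of Klein's combination theorem'' and then applied as a black box, so your argument is not a variant of the paper's proof but a replacement for a missing one. It is, moreover, correct, and it is the standard ping-pong proof of Klein's theorem. Taking $X_A=\partial\mathbb{H}^2_{\mathbb{H}}\setminus\overline{D}_A$ and $X_B=\partial\mathbb{H}^2_{\mathbb{H}}\setminus\overline{D}_B$, your containments $A^n(D_A^\circ)\subseteq X_A$ (an open set disjoint from a set is disjoint from its closure) and $X_B\subseteq D_A^\circ$ are right, and running a reduced word on a basepoint $x\in D_A^\circ\cap D_B^\circ$ --- a point lying in neither $X_A$ nor $X_B$ --- cleanly gives $w(x)\in X_A\cup X_B$, hence $w(x)\neq x$, for every nontrivial reduced word $w$; this basepoint version also sidesteps the cardinality hypotheses of the usual two-subgroup ping-pong lemma, and it makes your separate nonemptiness argument for $X_A,X_B$ redundant (nonemptiness already follows from $A(D^\circ)\subseteq X_A$ with $D^\circ\neq\emptyset$). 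The discreteness step (continuity of the action plus $g(D^\circ)\cap D^\circ=\emptyset$ for all $g\neq 1$) is likewise sound. Two caveats, the first of which you flag yourself. First, the containments need the fundamental domains to be regular, i.e.\ $\overline{D_A^\circ}=\overline{D}_A$ and $\mathrm{int}(\overline{D}_A)=D_A^\circ$; the proposition does not state this, but the domains the paper actually feeds into it (the exterior of two tangent Cygan balls, the strip between two fans) are regular, so this is a legitimate reading of ``fundamental domain'' rather than a gap. Second, discreteness is asserted in $\Sp(2,1)$ while the boundary action factors through $\PSp(2,1)=\Sp(2,1)/\{\pm I\}$; you should add one line observing that $-I$ cannot lie in $\langle A,B\rangle$ (it acts trivially on the boundary, whereas your argument shows every nontrivial word moves $x$), so every element $g\neq I$ of the subgroup is a nontrivial reduced word and the wandering property applies to it. With those two remarks incorporated, your proof is a complete and self-contained justification of a statement the paper only cites.
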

	
	\noindent{\it Proof of Theorem \ref{thm-main-improved}.}
	Let $A$ be the Heisenberg translation by $p_2=(\zeta_2,v_2)$ and consider
	the involution $\iota$ given in \eqref{eq-iota}; recall that $\iota$ maps the Cygan sphere centered at $o$ of radius $R$ to the same Cygan sphere
	centered at $o$ with radius $1/R$.
	
	Consider next the element $\iota A\iota$. It has the same form as $B$, but with subscript $1$ consistently replaced by subscript $2$. In particular, 
	the isometric spheres of $\iota A\iota$ and its inverse both have radius $1/K(p_2)$ and center 
	$$
	\iota A^{-1}\iota(\infty)=\left( -\zeta_2(\kappa(p_2^{-1})),\  \frac{v_2}{K^4(p_2)}\right)$$
	$$\iota A\iota(\infty)=\left( \zeta_2(\kappa(p_2)),\  \frac{-v_2}{K^4(p_2)}\right).
	$$ 
	Using Proposition~\ref{prop-iso-diam}, we conclude that they are contained in the Cygan ball with center $o$ and radius $R_\infty$ where
	$$
	R_\infty=\frac{2^{1/4}}{K(p_2)}
	\left(\left(1-\frac{|v_2|}{K^2(p_2)}\right)^{1/3}+\left(1+\frac{|v_2|}{K^2(p_2)}\right)^{1/3}\right)^{3/4}.
	$$
	Therefore, the image under $\iota$ of these spheres is contained in the exterior of a Cygan sphere with center $o$ and radius
	$1/R_\infty$. Hence, if $R_o\le 1/ R_\infty$ then we can use the Klein combination theorem to conclude that $A$ and $B$ 
	freely generate the subgroup $\langle A,\,B\rangle$. The inequality $R_o\le 1/ R_\infty$ is equivalent to
	\begin{eqnarray*}
		1\ge R_\infty R_o &=& \frac{2^{1/2}}{K(p_2)K(p_1)}
		\left(\left(1-\frac{|v_2|}{K^2(p_2)}\right)^{1/3}+\left(1+\frac{|v_2|}{K^2(p_2)}\right)^{1/3}\right)^{3/4}\\
		&\times& \left(\left(1-\frac{|v_1|}{K^2(p_1)}\right)^{1/3}+\left(1+\frac{|v_1|}{K^2(p_1)}\right)^{1/3}\right)^{3/4}.
	\end{eqnarray*}
	Multiplying both sides by $K(p_2)K(p_1)$, we obtain condition (1') of Theorem~\ref{thm-main-improved}.
	
	\subsection{Fundamental domain bounded by a fan}\label{sec-fan}
	Fans are a special class of hypersurfaces in the quaternionic Heisenberg group $\HQ$, as discussed in \cite{JL}. An infinite fan is a copy of the six-dimensional Euclidean space $\mathbb{R}^6$ within $\HQ$, whose image under the vertical projection is a real affine hyperplane in the quaternions $\mathbb{H}$.
	
	Fix a $q_0= |q_0| u_0\in{\mathbb H}$, where $u_0$ is a unit quaternion. The {\it fan of $q_0$ with vertex at infinity} is given in Heisenberg coordinates by
	$$
	F^{(\infty)}_{|q_0|}=\{(q,w) \in \HQ:\,   \Re(q\bar u_0)= |q_0|\},
	$$
	so that the standard lift of a point $(q,w)\in F^{(\infty)}_{|q_0|}$ is  
	$$
	\left(\begin{matrix} -|q_0|^2-|v|^2+ w  \\ 
		\sqrt{2}(|q_0|+v)u_0 \\ 1 \end{matrix}\right),\quad v=\Im(q\overline{u_0}).
	$$
	Recall the vertical projection $\Pi:\HQ\to \mathbb{H}$: 
	$\Pi(\zeta,v)=\zeta$ for each $(\zeta,v)\in\HQ$.
	We immediately have
	\begin{lemma}\label{lem-project-fan-infty}
		The image of $F^{(\infty)}_{|q_0|}$ under the vertical projection is the real affine hyperplane
		$$
		F=\left\{q \in \mathbb{H}:  \Re(q\bar u_0)= |q_0| \right\}
		$$
		and thus $F^{(\infty)}_{|q_0|}= \Pi^{-1}(F)$.
	\end{lemma}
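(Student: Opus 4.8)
The statement is essentially an unwinding of definitions, so the plan is to verify the two inclusions defining the image and then observe that the preimage identity follows for free. The crucial point is that the defining condition $\Re(q\bar u_0)=|q_0|$ of the fan $F^{(\infty)}_{|q_0|}$ involves only the horizontal coordinate $q$ and imposes no constraint on the vertical coordinate $w\in\Im(\mathbb{H})$; since $\Pi$ reads off precisely this horizontal coordinate, the fan is exactly the $\Pi$-saturation of the affine hyperplane $F$.

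First I would prove the inclusion $\Pi(F^{(\infty)}_{|q_0|})\subseteq F$: if $(q,w)\in F^{(\infty)}_{|q_0|}$, then by definition $\Re(q\bar u_0)=|q_0|$, whence $\Pi(q,w)=q$ satisfies the defining equation of $F$. For the reverse inclusion $F\subseteq\Pi(F^{(\infty)}_{|q_0|})$, I would take an arbitrary $q\in F$ and exhibit a preimage: choosing any $w\in\Im(\mathbb{H})$, say $w=0$, the point $(q,w)$ lies in $\HQ$ and satisfies $\Re(q\bar u_0)=|q_0|$, so $(q,w)\in F^{(\infty)}_{|q_0|}$ and $q=\Pi(q,w)$ is in the image. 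Together these two inclusions give the equality of the image with $F$.

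Finally, the identity $F^{(\infty)}_{|q_0|}=\Pi^{-1}(F)$ follows from the same observation applied at the level of membership: for any $(q,w)\in\HQ$ we have $(q,w)\in\Pi^{-1}(F)$ if and only if $\Pi(q,w)=q\in F$, if and only if $\Re(q\bar u_0)=|q_0|$, if and only if $(q,w)\in F^{(\infty)}_{|q_0|}$. There is no genuine obstacle in this lemma; the only thing to be careful about is recording that the vertical coordinate $w$ ranges freely over $\Im(\mathbb{H})$, so that every $q\in F$ admits a lift to the fan. This is precisely what justifies the word \emph{immediately} flagged in the surrounding text, and it is the reason the fan can be regarded as a full cylinder $\Pi^{-1}(F)$ over the real affine hyperplane $F\subset\mathbb{H}$.
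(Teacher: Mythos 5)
Your proof is correct and matches the paper's treatment: the paper states this lemma as immediate from the definitions (the fan's defining condition $\Re(q\bar u_0)=|q_0|$ involves only the horizontal coordinate, so the fan is exactly the $\Pi$-saturation of $F$), which is precisely the observation you spell out. Nothing is missing; your two-inclusion verification and the membership equivalence for $\Pi^{-1}(F)$ are just the explicit form of what the paper leaves unwritten.
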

	Let $A$ be the Heisenberg translation by $p_2=(\zeta_2,v_2)$  (\eqref{eq-A-B}). Consider the infinite fan $F^{(\infty)}_{|q_0|}$, where $q_0= |q_0| u_0\in{\mathbb H}$ and $u_0=\frac{\bar \zeta_2}{|\zeta_2|}$. The map $A$ maps 
	$F^{(\infty)}_{|q_0|}$ to $F^{(\infty)}_{|q_0| + |\zeta_2|}$:
	\begin{eqnarray*}
		\left(\begin{matrix} 
			1 & -\sqrt{2}\,\bar{\zeta}_2 & -|\zeta_2|^2 + v_2 \\
			0 & 1 & \sqrt{2}\,\zeta_2 \\
			0 & 0 & 1
		\end{matrix}\right)
		\left(\begin{matrix}-|q_0|^2-|v|^2+ w  \\ 
			\sqrt{2}(|q_0|+v)u_0 \\ 1 \end{matrix}\right)&=&\\
		\left(\begin{matrix} -(|q_0|+|\zeta_2|)^2-|v|^2+ w -2\bar \zeta_2 v u_0 + v_2 \\ 
			\sqrt{2}(|q_0|+|\zeta_2|+v)u_0 \\ 1 \end{matrix}\right).
	\end{eqnarray*}
	The images of $F^{(\infty)}_{|q_0|}$ and $F^{(\infty)}_{|q_0| + |\zeta_2|}$ under the vertical projection $\Pi$ are the hyperplanes in $\mathbb{H}$
	$$
	\Re\left(q ~\frac{\bar \zeta_2}{|\zeta_2|}\right)= |q_0|, \quad \Re\left(q ~\frac{\bar \zeta_2}{|\zeta_2|}\right)= |q_0| + |\zeta_2| .
	$$
	It follows that the strip bounded by these two fans is a fundamental region for $\langle A\rangle$. The image under the vertical projection
	is the strip 
	$$
	S_A(|q_0|)=\left\{q \in \mathbb{H}\ :\ 
	|q_0|\le  \Re\left(q ~\frac{\bar \zeta_2}{|\zeta_2|}\right) \le |q_0|+|\zeta_2| \right\}.
	$$
	Therefore, if there exists a fundamental domain $D_B$ for  $\langle B\rangle$ that contains the complement of this strip, then the Klein combination theorem implies that the subgroup $\langle A, B\rangle$ is discrete and free. In particular, this condition is satisfied if the vertical projection of $\partial D_B$ is contained in $S_A$, and the vertical projection of $D_B$ contains the complement of $S_A(|q_0|)$.

	To see this, consider a point in the complement of $S_A(|q_0|)$. By construction, its pre-image under the vertical projection contains at least one point of $D_B$  and no points on its boundary. Since the pre-image is connected and path-connected (as the fibers of the vertical projection are copies of $\mathbb{R}^3$), all points in this pre-image must be contained in $D_B$, as required.
	
	\begin{proposition}\label{prop-fan-domain}
		Let $p_i=(\zeta_i,v_i)\in\HQ$ be distinct points and let $A$ and $B$ be the 
		associated Heisenberg translations fixing $\infty$ and $o$ respectively, as given by \eqref{eq-A-B}. Suppose that
		\begin{eqnarray*}
			|\zeta_2| K(p_1) 
			&\ge& 
			\frac{2 |\zeta_1|^{2}\Re (\zeta_1 
				\overline{\zeta_2})}{|\zeta_2|K^3(p_1)} 
			+ 2.
		\end{eqnarray*}
		Then the vertical projection of the isometric spheres of $B$ and $B^{-1}$ is contained in the strip $S_A(|q_0|)$ for some fixed $q_0=|q_0|\frac{\overline{\zeta_2}}{|\zeta_2|}$.
	\end{proposition}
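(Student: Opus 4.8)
The plan is to collapse the genuinely four-dimensional containment ``projections of the isometric spheres $\subseteq S_A(|q_0|)$'' into a single scalar inequality, by exploiting two facts: that the vertical projection of a Cygan sphere is an ordinary round ball, and that the strip is a slab cut out by one affine functional. First I would record how $\Pi$ acts on a Cygan sphere. A point $(\zeta,v)$ lies on $S_r(\zeta_0,v_0)$ exactly when $|\zeta-\zeta_0|^4+|v-v_0-2\Im(\bar\zeta_0\zeta)|^2=r^4$, since the real part of the defining Cygan expression is $|\zeta-\zeta_0|^2$ while its imaginary part $w=v-v_0-2\Im(\bar\zeta_0\zeta)$ runs over all of $\Im(\mathbb{H})$ as $v$ varies over $\Im(\mathbb{H})$. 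Hence for fixed $\zeta$ a solution $v$ exists iff $|\zeta-\zeta_0|\le r$, so $\Pi(S_r(\zeta_0,v_0))=\{\zeta\in\mathbb{H}:|\zeta-\zeta_0|\le r\}$ is the closed Euclidean ball of radius $r$ about $\zeta_0$. Applied to the isometric spheres of $B$ and $B^{-1}$, whose common radius is $\rho=1/K(p_1)$ and whose centres were computed above, this gives two closed balls $D_1,D_2\subseteq\mathbb{H}$ of radius $\rho$ about $c_1=-\zeta_1\,\kappa(p_1^{-1})^{-1}$ and $c_2=\zeta_1\,\kappa(p_1)^{-1}$.

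Next I would reduce slab containment to one dimension. Writing $\ell(q)=\Re(q\,\bar\zeta_2/|\zeta_2|)$, which is orthogonal projection onto the unit vector $\zeta_2/|\zeta_2|$ and hence $1$-Lipschitz, the strip is $S_A(|q_0|)=\{q:|q_0|\le\ell(q)\le|q_0|+|\zeta_2|\}$, a slab of width $|\zeta_2|$. A ball of radius $\rho$ about $c$ lies in this slab iff $\ell(c)-\rho\ge|q_0|$ and $\ell(c)+\rho\le|q_0|+|\zeta_2|$, so both $D_1$ and $D_2$ fit, for a suitable offset, precisely when the total $\ell$-extent of $D_1\cup D_2$, equal to $|\ell(c_1)-\ell(c_2)|+2\rho$, does not exceed $|\zeta_2|$. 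By choosing the orientation of $u_0=\bar\zeta_2/|\zeta_2|$, and hence the sign of the offset, I can always position the admissible strip so that $|q_0|\ge 0$, so this width inequality is the only genuine constraint.

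Finally I would evaluate the extent. Using $\kappa(p_1^{-1})=\overline{\kappa(p_1)}$, $|\kappa(p_1)|^2=K^4(p_1)$ and $\kappa(p_1)+\overline{\kappa(p_1)}=2\,\Re(\kappa(p_1))=-2|\zeta_1|^2$, one gets $c_1-c_2=2|\zeta_1|^2\zeta_1/K^4(p_1)$, whence $\ell(c_1)-\ell(c_2)=2|\zeta_1|^2\Re(\zeta_1\bar\zeta_2)/(K^4(p_1)|\zeta_2|)$. Substituting into the condition $|\ell(c_1)-\ell(c_2)|+2/K(p_1)\le|\zeta_2|$ and multiplying through by $K(p_1)$ reproduces exactly the stated hypothesis; the admissible choice $|q_0|=\min(\ell(c_1),\ell(c_2))-\rho$ then places both $D_1$ and $D_2$ inside $S_A(|q_0|)$, which is the assertion.

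I expect the main conceptual step to be the identification of the vertical projection of a Cygan sphere with a round Euclidean ball, since this is precisely what turns a four-dimensional containment into a scalar inequality; once that is in place, the computation of $\ell(c_1)-\ell(c_2)$ via the identity $\kappa(p_1)+\overline{\kappa(p_1)}=-2|\zeta_1|^2$ is routine. The one technical wrinkle is bookkeeping the sign of $\Re(\zeta_1\bar\zeta_2)$ alongside the orientation of $u_0$, so that the signed difference $\ell(c_1)-\ell(c_2)$ matches the unsigned extent in the width condition and the offset $|q_0|$ may be taken non-negative.
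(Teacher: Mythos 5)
Your proposal follows essentially the same route as the paper's own proof: project the isometric spheres of $B$ and $B^{-1}$ to round Euclidean balls in $\mathbb{H}$ of radius $1/K(p_1)$ centered at $c_1=-\zeta_1\kappa(p_1^{-1})^{-1}$ and $c_2=\zeta_1\kappa(p_1)^{-1}$, reduce containment in the strip to a one-dimensional width estimate for the functional $\ell(q)=\Re\bigl(q\,\bar\zeta_2/|\zeta_2|\bigr)$, and compute $c_1-c_2=2|\zeta_1|^2\zeta_1/K^4(p_1)$ via $\kappa(p_1^{-1})=\overline{\kappa(p_1)}$. Your identification of the vertical projection of a Cygan sphere as a ball of the \emph{same} radius is correct, and is in fact justified more carefully than in the paper, whose passing assertion that this radius is $1/K^2(p_1)$ is a slip corrected by its own final step.

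The genuine problem is the sign issue you flag at the end, and your proposed resolution does not work. Your width condition $|\ell(c_1)-\ell(c_2)|+2/K(p_1)\le|\zeta_2|$ — which is not merely sufficient but necessary for the containment — involves $|\Re(\zeta_1\bar\zeta_2)|$, whereas the hypothesis of the proposition involves the signed quantity $\Re(\zeta_1\bar\zeta_2)$; the two agree only when $\Re(\zeta_1\bar\zeta_2)\ge 0$. Replacing $u_0$ by $-u_0$ cannot bridge this: it replaces $\ell$ by $-\ell$, which swaps which ball is extremal and changes the admissible offset, but leaves both $|\ell(c_1)-\ell(c_2)|$ and the slab width unchanged, so the width condition is orientation-invariant. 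And when $\Re(\zeta_1\bar\zeta_2)<0$ the stated hypothesis really is too weak: for $p_1=(1,0)$, $p_2=(-1,0)$ one has $|\zeta_2|K(p_1)=1\ge \frac{2|\zeta_1|^2\Re(\zeta_1\bar\zeta_2)}{|\zeta_2|K^3(p_1)}+2=0$, yet the projected balls are the unit balls centered at $\pm 1$, whose union has $\ell$-extent of length $4>1=|\zeta_2|$, so no strip of width $|\zeta_2|$ can contain them. To be fair, the paper's proof makes the same tacit assumption — its two-sided bound $\ell(\widetilde p_2)-r_B\le\ell(q)\le\ell(\widetilde p_1)+r_B$ for \emph{all} $q$ presupposes $\ell(\widetilde p_2)\le\ell(\widetilde p_1)$, i.e.\ $\Re(\zeta_1\bar\zeta_2)\ge 0$ — so your argument is on the same footing as the paper's and is valid exactly in that regime; but your closing claim that the width condition ``reproduces exactly the stated hypothesis'' is an overclaim, and no bookkeeping of $u_0$ can repair what is really a defect of the statement itself. (A similar remark applies to your offset normalization: both isometric spheres pass through $o$, so both projected balls contain $0\in\mathbb{H}$, and under a literal insistence that the lower fan sit at height $|q_0|\ge 0$ in the direction $\bar\zeta_2/|\zeta_2|$ the containment would essentially never hold; the strip must simply be allowed an arbitrary real offset, which is what both you and the paper implicitly do.)
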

	\begin{proof}
		The vertical projection of the isometric sphere associated with $B$ is a sphere in $\mathbb{H}$. The radius of this sphere is given by $r_B= 1/K^2(p_1),$ whose center is located at $\widetilde{p_1} = -\zeta_1(\kappa(p_1^{-1}))^{-1}.$ Any point $q$ on the vertical projection of the isometric sphere of $B$ can be expressed as $q= \widetilde{p_1} + r_B ~u,$ where $u$ is a unit quaternion.
		Therefore, $$\Re\left(q \frac{\overline{ \zeta_2}}{|\zeta_2|}\right) = \Re \left(\widetilde{p_1} \frac{\overline{\zeta_2}}{|\zeta_2|}\right) + r_B\Re\left(u\frac{\overline{ \zeta_2}}{|\zeta_2|}\right).$$
		Similarly, the vertical projection of the isometric sphere associated with $B^{-1}$ is also a sphere, with radius $r_{B^{-1}}=r_B= 1/K^2(p_1)$ and center ${\widetilde p_2}= \zeta_1(\kappa(p_1))^{-1}.$ Hence, any point $q$ on the vertical projection of the isometric sphere of $B^{-1}$ can be written as $q= {\widetilde p_2} + r_{B^{-1}}u,$ where $u$ is a unit quaternion. 
		Thus we have 
		$$
		\Re\left(q \frac{\overline{\zeta_2}}{|\zeta_2|}\right) = \Re \left({\widetilde p_2} \frac{\overline{\zeta_2}}{|\zeta_2|}\right) + r_{B^{-1}}\Re\left(u~ \frac{\overline{\zeta_2}}{|\zeta_2|}\right).
		$$
		Since $u\frac{\overline{\zeta_2}}{|\zeta_2|}\in \Sp(1),$ we have  $-1 \leq \Re\left(u \frac{\overline{\zeta_2}}{|\zeta_2|}\right)\leq 1.$ Therefore, for all points $q$ in the vertical projections of the isometric spheres
		of $B$ and $B^{-1}$, we have
		$$
		\Re \left({\widetilde p_2}\frac{\overline{\zeta_2}}{|\zeta_2|}\right) - r_{B^{-1}} \leq \Re\left(q\frac{\overline{\zeta_2}}{|\zeta_2|}\right) \leq 
		\Re \left({\widetilde p_1} \frac{\overline{\zeta_2}}{|\zeta_2|}\right) + r_{B}.
		$$
		Hence, these vertical projections lie in a strip of the form $S_A(|q_0|),$ provided the following inequality holds
		$$
		\Re \left({\widetilde p_1}\frac{\overline{} \zeta_2}{|\zeta_2|}\right)- \Re \left({\widetilde p_2}\frac{\overline{\zeta_2}}{|\zeta_2|}\right)+ 2r_{B}\leq|\zeta_2|,
		$$
		which simplifies to 
		$$ 
		\Re \left(({\widetilde p_1}-{\widetilde p_2}) \frac{\overline{\zeta_2}}{|\zeta_2|}\right) + 2r_{B} \leq|\zeta_2|.
		$$
		Equivalently,
		$$ 
		\Re \left[\left(  -\zeta_1(\kappa(p_1^{-1}))^{-1}-\zeta_1(\kappa(p_1))^{-1}\right) \frac{\overline{\zeta_2}}{|\zeta_2|}\right] + 2r_{B} \leq|\zeta_2|.
		$$
		Since $r_B=1/K(p_1)$, multiplying both sides by $K(p_1)$, we obtain
		\begin{eqnarray*}
			|\zeta_2| K(p_1) 
			&\ge& 
			\frac{K(p_1)}{|\zeta_2|} \cdot 
			\Re \left[-\zeta_1(\kappa(p_1^{-1}))^{-1}-\left(\kappa(p_1))^{-1}\right)
			{\overline \zeta_2} 
			\right] 
			+ 2\\
			&\ge& 
			\frac{2 |\zeta_1|^{2}\Re (\zeta_1 
				\bar{\zeta}_2 )}{|\zeta_2|K^3(p_1)} 
			+ 2,
		\end{eqnarray*}
		from which we obtain condition (\ref{main-2}) of Theorem~\ref{thm-main-improved}.
	\end{proof}
	By swapping roles of $A$ and $B$, we also obtain condition (\ref{main-3}) of Theorem~\ref{thm-main-improved}.
	\section{Appendix}\label{sec-appendix}
	
	\noindent{\it Proof of Lemma \ref{lem-trig}}.
	Suppose first that $\alpha=\pm\pi/4$. Then $f_\alpha(\theta)=(1/\sqrt{2})(\cos\theta\mp\sin\theta).$ In this case, the function attains its maximum value of at $\theta=\mp\pi/4$, and the claim holds.
	Next, suppose that $\alpha\in(-\pi/4,\pi/4)$ and that $\theta \in (-\pi/4,\pi/4)$. Differentiating $f_\alpha(\theta)$ with respect to $\theta$ gives
	$$
	f'_\alpha(\theta)=-\sin(\theta+\alpha)-\frac{\sin(2\theta)\sqrt{\cos(2\alpha)}}{\sqrt{\cos(2\theta)}}.
	$$
	Note that if $f'_\alpha(\theta)=0$ then $\sin(\theta+\alpha)$ and $\sin(2\theta)$ have opposite signs. Hence, either
	$\alpha<-\theta$ or $\alpha>-\theta$. 
	Now, if $f'_\alpha(\theta)=0$ then
	$$
	\sin^2(\theta+\alpha)\cos(2\theta) - \sin^2(2\theta)\cos(2\alpha)=0.
	$$
	By using standard trigonometric identities,
	and after simplification, the above equation may be written as follows:
	\begin{eqnarray*}
		0
		& = & \sin(\alpha-\theta) \left(\cos\alpha\left(3\cos^2\theta\sin\theta+\sin^3\theta\right)
		+\sin\alpha\left(\cos^3\theta+3\cos\theta\sin^2\theta\right)\right).
	\end{eqnarray*}
	Since $\sin(\alpha-\theta)\neq 0$, we deduce that
	\begin{eqnarray*}
		0 & = & \cos\alpha\Bigl(\bigl(\cos\theta+\sin\theta\bigr)^3-\bigl(\cos\theta-\sin\theta\bigr)^3\Bigr) \\
		&& \quad +\sin\alpha\Bigl(\bigl(\cos\theta+\sin\theta\bigr)^3+\bigl(\cos\theta-\sin\theta\bigr)^3\Bigr).
	\end{eqnarray*}
	Hence, the vanishing of the derivative implies that at critical points we have
	$$
	\frac{\cos\theta+\sin\theta}{\cos\theta-\sin\theta}
	=\left(\frac{\cos\alpha-\sin\alpha}{\cos\alpha+\sin\alpha}\right)^{1/3}.
	$$
	Let
	$
	\lambda=\bigl(\cos\alpha-\sin\alpha\bigr)^{1/3},\quad
	\mu =\bigl(\cos\alpha+\sin\alpha\bigr)^{1/3}.
	$
	Then
	$$
	\cos\alpha=\frac{\lambda^3+\mu^3}{2},\quad \sin\alpha=\frac{-\lambda^3+\mu^3}{2}.
	$$
	Define $\theta_0\in(-\alpha,0)$ such that 
	$$
	\cos\theta_0=\frac{\lambda+\mu}{\sqrt{2(\lambda^2+\mu^2)}},\quad 
	\sin\theta_0=\frac{\lambda-\mu}{\sqrt{2(\lambda^2+\mu^2))}}.
	$$
	Substituting this, we see that 
	$$
	\cos(\theta_0+\alpha) = \frac{\lambda^4+\mu^4}{\sqrt{2(\lambda^2+\mu^2))}}, \quad
	\cos(2\theta_0)  =  \frac{4\lambda\mu}{2(\lambda^2+\mu^2)}, \quad
	\cos(2\alpha) =  \lambda^3\mu^3.
	$$
	Therefore,
	\begin{eqnarray*}
		f_\alpha(\theta_0) & = & \frac{\lambda^4+\mu^4}{\sqrt{2(\lambda^2+\mu^2))}}+
		\frac{2\lambda^2\mu^2}{\sqrt{2(\lambda^2+\mu^2))}} \\
		& = & \frac{(\lambda^2+\mu^2)^{3/2}}{\sqrt{2}} \\
		& = & \frac{1}{\sqrt{2}}\Bigl(\bigl(\cos(\alpha)-\sin(\alpha)\bigr)^{2/3}+\bigl(\cos(\alpha)+\sin(\alpha)\bigr)^{2/3}\Bigr)^{3/2} \\
		& = & \frac{1}{\sqrt{2}}\Bigl(\bigl(1-\sin(2\alpha)\bigr)^{1/3}+\bigl(1+\sin(2\alpha)\bigr)^{1/3}\Bigr)^{3/2}.
	\end{eqnarray*}
	We now prove that the critical point $\theta_0$ corresponds to a global maximum of $f_\alpha$. The second derivative is
	$$
	f_\alpha''(\theta)=-\cos(\theta+\alpha)-2\sqrt{\cos(2\theta)\cos(2\alpha)}-\frac{\sin^2(2\theta)\sqrt{\cos(2\alpha)}}{(\cos(2\theta))^{3/2}}.
	$$
	Since
	$$
	\sin(2\theta_0)=\frac{\lambda^2-\mu^2}{\lambda^2+\mu^2},
	$$
	we obtain
	$$
	f_\alpha''(\theta_0)=-\frac{3\sqrt{2}}{4}(\lambda^2+\mu^2)^{3/2}<0,
	$$
	and this proves that we have a local maximum at $\theta_0$ and
	$$
	f_\alpha(\theta_0)=\frac{1}{\sqrt{2}}\Bigl(\bigl(1-\sin(2\alpha)\bigr)^{1/3}+\bigl(1+\sin(2\alpha)\bigr)^{1/3}\Bigr)^{3/2}.
	$$
	To confirm that it is a global maximum, it suffices to check that
	$$f_\alpha( \pi/4)= \cos( \pi/4 +\alpha) \leq \frac{1}{\sqrt{2}}\Bigl(\bigl(1-\sin(2\alpha)\bigr)^{1/3}+\bigl(1+\sin(2\alpha)\bigr)^{1/3}\Bigr)^{3/2},$$ 
	$$f_\alpha( -\pi/4)= \cos( -\pi/4 +\alpha) \leq \frac{1}{\sqrt{2}}\Bigl(\bigl(1-\sin(2\alpha)\bigr)^{1/3}+\bigl(1+\sin(2\alpha)\bigr)^{1/3}\Bigr)^{3/2}.$$  
	From the symmetry in the variables $\alpha $ and $\theta$, we have
	$\cos(\pi/4+\alpha)\le f_\alpha(\theta_0)$ for all $\alpha \in(-\pi/4,\pi/4)$
	and equality holds if and only if $\alpha= -\pi/4$. Similarly, we also have that $\cos(\pi/4-\alpha)\le f_\alpha(\theta_0)$ for all $\alpha \in(-\pi/4,\pi/4)$, with equality holds if and only if
	$\alpha= \pi/4.$  This completes the proof.
	\qed
	
	\bigskip
	
	\noindent{\it Proof of Lemma \ref{mam-bound}.}
	First, we observe that the values of the function $h$ at the boundary points are $h(\pm \frac{\pi}{2}) = 2.$ Next, assume that $\psi_1 \in \left(-\frac{\pi}{2}, \frac{\pi}{2}\right)$, and define the function
	$$
	g(\psi_1)=\left(1 - \sin \psi_1 \right)^{1/3} + \left(1 + \sin \psi_1 \right)^{1/3}
	$$
	on the open interval $I=(-\pi/2,\,\pi/2)$, so that $h(\psi_1)= \sqrt{2}.(g(\psi_1))^{1/3}$. 
	Then
	\begin{eqnarray*}
		g'(\psi_1)&=&(1/3)\cos\psi_1\left( (1+\sin\psi_1)^{-2/3}-(1-\sin\psi_1)^{-2/3}\right),
	\end{eqnarray*}
	which vanishes in $I$ only when $\psi_1=0$. We now write
	$
	g'(\psi_1)=(1/3)\cos\psi_1\cdot f(\psi_1) ,
	$
	where $f({\psi_1})= (1+\sin\psi_1)^{-2/3}-(1-\sin\psi_1)^{-2/3}$. Differentiating again and evaluating at $\psi_1=0$, we get
	$
	g''(0)=1/3f'(0)=-4/9<0.
	$
	Therefore $g(\psi_1)\le g(0)=2$ for each $\psi_1\in I$ and hence $h(\psi_1)\le h(0)=4$ for each $\psi_1$ in $[-\pi/2,\pi/2]$. Thus, the maximum value of $h$ is $4$, attained at $\psi_1=0$ and the minimum is $2$, attained at $\psi_1=\pm \frac{\pi}{2}$, which completes the proof. 
	\qed
	
	\bigskip
	
	\noindent{\it Proof of Lemma \ref{lem-bound}.} 
	First we identify the critical points of this function on the boundary of the domain $A$. If $\phi=\pi$ then
	\begin{eqnarray*}
		f_1(\psi_2)= \eta_{\psi_1}(\psi_2, \pi) & = & 2~ \cos \left(\frac{\psi_1 + \psi_2}{2}\right)  + 2\sqrt{\cos\psi_2 \cos\psi_1 .} \\
	\end{eqnarray*}
	By Lemma \ref{lem-trig} we have $$ f_1(\psi_2) \leq \sqrt{2} \left( \left(1 - \sin \psi_1 \right)^{1/3} + \left(1 + \sin \psi_1 \right)^{1/3} \right)^{3/2}.$$ 
	When $\phi=0$ we get
	\begin{eqnarray*}
		f_2(\psi_2)=\eta_{\psi_1}(\psi_2, 0) & = & 2~ \cos \left(\frac{\psi_1 - \psi_2}{2}\right)  + 2\sqrt{\cos\psi_2 \cos\psi_1 } \\
	\end{eqnarray*}
	and by a similar argument we obtain $$f_2(\psi_2)\leq \sqrt{2} \left( \left(1 - \sin \psi_1 \right)^{1/3} + \left(1 + \sin \psi_1 \right)^{1/3} \right)^{3/2}.$$ 
	Now, when \(\psi_2 = \pm\frac{\pi}{2}\),
	\[
	g_\pm(\phi)=\eta_{\psi_1}\left(\pm\tfrac{\pi}{2}, \phi\right) = \sqrt{2\left(1  \pm\cos \phi \cdot \sin \psi_1\right)}\le 2,
	\]
	and since by Lemma \ref{mam-bound} inequality $2\leq \max_{\psi_2, \phi} \eta_{\psi_1}(\psi_2, \phi)$ holds,
	we eventually obtain
	\[
	g_\pm(\phi)  \leq 2 \leq \max_{\psi_2, \phi} \eta_{\psi_1}(\psi_2, \phi).
	\]
	In the interior of $A$, the partial derivative with respect to \(\phi\) is
	\[
	\frac{\partial \eta_{\psi_1}(\psi_2, \phi)}{\partial \phi} = \frac{-\sin \phi \sin \psi_1 \sin \psi_2}{\sqrt{2 \left( 1 + \cos \psi_1 \cos \psi_2 + \cos \phi \sin \psi_1 \sin \psi_2 \right)}}.
	\]
	This vanishes either when $\psi_1 = 0$, or $\psi_2=0$, or $\phi =0$, $\phi= \pi$. For the special case $\psi_1=0$,
	$
	\eta_0(\psi_2,\phi)=\sqrt{2(1+\cos\psi_2}+2\sqrt{\cos\psi_2}\le 2.
	$
	In any case, the non-boundary critical points of $\eta_{\psi_1}(\psi_2,\phi)$ appear only when $\psi_2=0$.
	Now, the partial derivative with respect to $\psi_2$ is
	\[
	\frac{\partial \eta_{\psi_1}}{\partial \psi_2} = 
	\frac{-\cos\psi_1 \sin\psi_2 - \cos\phi \sin\psi_1 \cos\psi_2}
	{\sqrt{2 \left( 1 + \cos\psi_1 \cos\psi_2 + \cos\phi \sin\psi_1 \sin\psi_2 \right)}}
	- \frac{\sin\psi_2 \cos\psi_1}{\sqrt{\cos\psi_1 \cos\psi_2}}.
	\]
	Consider the equation 
	$
	\frac{\partial \eta_{\psi_1}}{\partial \psi_2} = 0.
	$
	When \(\psi_2 = 0\)  this becomes
	$$
	\frac{-\cos\phi \sin\psi_1}{\sqrt{2(1 + \cos\psi_1)}} = 0 \Rightarrow \cos\phi \sin\psi_1 = 0.
	$$
	For  \(\psi_1 \neq 0\) we thus have \(\phi = \frac{\pi}{2}\) and the only critical point of $\eta_{\psi_1}(\psi_2,\phi)$, $\psi_1\neq 0$, in the interior of $A$ is the point $(0,\pi/2)$.
	For the second derivatives at the critical point $(0,\pi/2)$ we observe first that
	$$
	\frac{\partial\eta_{\psi_1}}{\partial\phi}=\sin\psi_2\cdot f(\psi_1,\psi_2,\phi).
	$$
	Then the term $\sin\psi_2$ will remain as a multiplicative term after the derivation w.r.t. $\phi$ and thus
	$$
	\frac{\partial^2\eta_{\psi_1}}{\partial\phi^2}(0,\pi/2)=0.
	$$
	Now,
	$$
	\frac{\partial^2\eta_{\psi_1}}{\partial\psi_2\partial\phi}=\cos\psi_2\cdot f(\psi_1,\psi_2,\phi)+\sin\psi_2\cdot \frac{\partial f}{\partial \psi_2}.
	$$
	Therefore,
	$$
	\frac{\partial^2\eta_{\psi_1}}{\partial\psi_2\partial\phi}(0,\pi/2)=\sin\psi_1\cdot f(\psi_1,0,\pi/2)=-\frac{\sin\psi_1}{\sqrt{2(1+\cos\psi_1)}}.
	$$
	It turns out that the Hessian determinant of $\eta_{\psi_1}$ at $(0,\pi/2)$ is
	$$
	{\rm Hess}(\eta_{\psi_1}(0,\pi/2)=-\left(\frac{\partial^2\eta_{\psi_1}}{\partial\psi_2\partial\phi}(0,\pi/2)\right)^2=-\frac{\sin^2\psi_1}{2(1+\cos\psi_1)}<0,
	$$
	since $\psi_1\neq 0$. Therefore the point $(0,\pi/2)$ is a saddle point for $\eta_{\psi_1}$ and therefore the maximum is attained at the boundary.
	We conclude that the  maximum value of $\eta_{\psi_1}(\psi_2, \phi)$ is indeed
	$$\max_{\psi_2, \phi} \eta_{\psi_1}(\psi_2, \phi) =  \sqrt{2} \left( \left(1 - \sin \psi_1 \right)^{1/3} + \left(1 + \sin \psi_1 \right)^{1/3} \right)^{3/2}.$$
	\qed

\end{document}